\documentclass[a4paper,reqno]{amsart}

\usepackage{amsmath,amssymb,amsthm}
\usepackage{cite}
\usepackage{xcolor}
\usepackage{mathtools}
\usepackage{enumerate}
\usepackage{nicefrac}
\usepackage{bm}
\usepackage{a4wide}
\usepackage{booktabs}
\usepackage{algorithm,algorithmicx,algpseudocode}
\usepackage[pdftex]{graphicx}
\usepackage{color}

\def\F{\mathcal F}

\def\M{\mathcal M}

\newcommand{\jump}[1]{\lbrack\!\lbrack#1\rbrack\!\rbrack} 
\newcommand{\dif}{\,\text{d}}
\renewcommand{\L}{\mathsf{L}}

\renewcommand{\phi}{\varphi}

\newcommand{\tol}{\mathtt{tol}}
\newcommand{\esssup}{\mathop{\mathrm{ess\,sup}}}

\newcommand{\res}{{\mathtt{res}}}
\newcommand{\proj}{{\mathtt{proj}}}

\newtheorem{Remark}[equation]{Remark}

\newtheorem{theorem}[equation]{Theorem}
\newtheorem{proposition}[equation]{Proposition}
\newtheorem{corollary}[equation]{Corollary}
\newtheorem{lemma}[equation]{Lemma}

\numberwithin{equation}{section}

\title[$hp$-Adaptive Galerkin Time Stepping Methods for Nonlinear IVPs]{$hp$-Adaptive Galerkin Time Stepping Methods for \\ Nonlinear Initial Value Problems}
\author{Irene Kyza}
\address{Department of Mathematics, University of Dundee, Nethergate, Dundee, DD1 4HN, United Kingdom}
\email{ikyza@dundee.ac.uk}

\author{Stephen Metcalfe \and Thomas P.~Wihler}
\address{Mathematisches Institut, Universit\"at Bern, Sidlerstr.~5, CH-3012 Bern, Switzerland}
\email{smetcalfephd@gmail.com \and wihler@math.unibe.ch}

\thanks{The authors acknowledge the support of the Swiss National Science Foundation (SNF)}

\begin{document}

\begin{abstract}
This work is concerned with the derivation of an \emph{a posteriori} error estimator for Galerkin approximations to nonlinear initial value problems with an emphasis on finite-time existence in the context of blow-up. The stucture of the derived estimator leads naturally to the development of both $h$ and $hp$ versions of an adaptive algorithm designed to approximate the blow-up time. The adaptive algorithms are then applied in a series of numerical experiments, and the rate of convergence to the blow-up time is investigated.

\end{abstract}

\keywords{Initial value problems in Hilbert spaces, Galerkin time stepping schemes, high-order methods, blow-up singularities.}

\subjclass[2010]{65J08, 65L05, 65L60}

\maketitle


\section{Introduction}

In this paper we focus on continuous Galerkin (cG) and discontinuous Galerkin (dG) time stepping discretizations as applied to abstract 
initial value problems of the form
\begin{align}
  u'(t) &= \F(t,u(t)), \quad t\in(0,T),\qquad\qquad u(0) = u_0.\label{eq:1}
\end{align}
Here, $u:(0,T)\to H$, for some~$T>0$, denotes the unknown solution with values in a real Hilbert space~$H$ with inner product ~$(\cdot,\cdot)_H$ and induced norm~$\|\cdot\|_H$. 
The initial value $u_0 \in H$ prescribes the value of the solution at $t=0$, and~$\F:[0,T]\times H\to H$ is a possibly nonlinear operator. We emphasize that we include the case of~$\F$ being {\em unbounded} in the sense that
\begin{equation}\notag
\frac{\|\F(t,x)\|_H}{\|x\|_H}\to\infty\text{ as }\|x\|_H\to\infty,\qquad 0\le t\le T.
\end{equation}
Note that the existence interval for solutions may be arbitrarily small even for smooth~$\F$. Indeed, for certain data the solution of~\eqref{eq:1} can become unbounded in finite time, i.e.,
\[
\|u(t)\|_H<\infty\text{ for }0<t<T_\infty,\qquad \qquad \lim_{t\nearrow T_\infty}\|u(t)\|_H = \infty.
\]
This effect is commonly termed \emph{finite-time blow-up} or sometimes just \emph{blow-up} and the quantity $T_{\infty}$ is called the \emph{blow-up time}.

The main contributions of this paper are as follows:
\begin{enumerate}[$\bullet$]
\item The derivation of conditional \emph{a posteriori} error bounds for $hp$-cG and $hp$-dG approximations to the nonlinear initial value problem \eqref{eq:1}.
\item The design of efficient adaptive algorithms that lead to accurate approximation of the blow-up time in the case where problem \eqref{eq:1} exhibits finite time blow-up.
\end{enumerate}
To the best of our knowledge, this is the first time that an adaptive algorithm has been developed for $hp$-cG and $hp$-dG  time-stepping methods based on rigorous \emph{a posteriori} error control for problems of the form \eqref{eq:1}. The adaptive procedure that we propose includes both $h$-adaptive and $hp$-adaptive variants. Indeed, one of the contributions of this paper is to motivate how we can choose effectively between $h$- or $p$-adaptivity locally while taking into account the possible singular behavior of the problem under consideration. In this sense, these results extend the $h$-adaptive algorithm analyzed for some special cases of \eqref{eq:1} and Euler-type time discretizations in  \cite{CGKM15}. In particular, the inclusion of higher order time-stepping schemes and $hp$-adaptivity allows us to overcome key limitations encountered in \cite{CGKM15}. 

\emph{A posteriori} error estimators for linear problems tend to be \emph{unconditional}, that is, they always hold independent of the problem data and the size of the discretization parameters.  For nonlinear problems, the situation is more complicated since the existence of a solution to an appropriate error equation (and, thus, of an error bound) usually requires that either the data or the discretization parameters are sufficiently small. As a result, \emph{a posteriori} error estimators for nonlinear problems tend to be \emph{conditional}, that is, they only hold provided that an \emph{a posteriori} verifiable condition (which can be either explicit or implicit) is satisfied. For nonlinear time-dependent problems, there are two commonly used approaches for deriving conditional \emph{a posteriori} error bounds: continuation arguments, cf. \cite{CGKM15, KNS04}, and fixed point arguments, cf. \cite{KM01, CM08}. The \emph{a posteriori} error bounds that we derive here are obtained by utilizing a local continuation argument.

Galerkin time stepping methods for initial value problems are based on weak formulations and for both the cG and dG time stepping schemes, the test spaces consist of polynomials 
that are discontinuous at the time nodes. In this way, the discrete Galerkin formulations decouple into local problems on each time step and the discretizations can therefore be 
understood as implicit one-step schemes. In the literature, Galerkin time stepping schemes have been extensively analyzed for ordinary differential equations (ODEs), cf. ~\cite{rannacher,DeDu86,DelfourHagerTrochu81,Estep95,EstepFrench94,johnson}. 

A key feature of Galerkin time stepping methods is their great flexibility with respect to the size of the time steps and the local approximation orders which lends these schemes well
to an $hp$-framework. The $hp$-versions of the cG and dG time stepping schemes were introduced and analyzed in the works~\cite{SchoetzauSchwabDGODE,SchoetzauSchwab00,ScWi10,Wihler05}.
In particular, in the articles~\cite{SchoetzauSchwabDGODE,Wihler05} which focus on initial value problems with uniform Lipschitz nonlinearities, 
the use of the Banach fixed point theorem made it possible to prove existence and uniqueness results for the discrete Galerkin solutions which are independent of the local approximation 
orders; these results have been extended to discrete Peano-type existence results in the context of more general nonlinearities in~\cite{JanssenWihler:15}. We emphasize that the $hp$-approach is well known for its ability to approximate smooth solutions with possible local singularities at high algebraic or even exponential rates of convergence; see, e.g., \cite{BrunnerSchoetzau06,SchoetzauSchwab00,SchoetzauSchwab01,Gerdes} for the numerical approximation of problems with start-up singularities. In light of this, a main aim of this paper is to establish through numerical experiments whether or not $hp$-refinement, utilizing the derived \emph{a posteriori} error estimator, can lead to exponential convergence towards the blow-up time for the case where \eqref{eq:1} exhibits finite time blow-up.

\subsection*{Outline} The remainder of our article is organized as follows. In Section 2, we introduce the $hp$-cG and $hp$-dG time stepping schemes while  in Section 3 we develop \emph{a posteriori} error bounds for these schemes. We design $h$ as well as $hp$ version adaptive algorithms to approximate the blow-up time in Section 4 before applying them to some numerical experiments in Section 5. Finally, we draw conclusions and comment on possible future research in Section 6.

\subsection*{Notation} Let $H$ denote a real Hilbert space with inner product $(\cdot,\cdot)_H$ and induced norm $\|\cdot\|_H$ as before. Given an interval $I=(a,b)$, the Bochner space $C(\bar{I};H)$ consists of all functions $u:\bar I\to H$ that are continuous on $\bar{I}$ with values in $H$. Moreover, for~$1\le p\le\infty$, we define the norm
\[
\displaystyle\|u\|_{L^p(I;H)}:=\begin{cases}
\displaystyle\left(\int_I\|u(t)\|^p_H\dif t\right)^{\nicefrac{1}{p}}, \qquad &1\le p<\infty,\\[2ex]
\displaystyle \esssup_{t\in I}\|u(t)\|_H,&p=\infty,
\end{cases}
\]
and we let $L^p(I;H)$ be the space of all measurable functions $u:I\to H$ such that the corresponding norm is bounded. Note that~$L^2(I;H)$ is a Hilbert space with inner product 
and norm given by
\[
(u,v)_{L^2(I;H)}:=\int_I(u(t),v(t))_H\dif t,\qquad\text{and}\qquad \|u\|_{L^2(I;H)}:=
\left(\int_I\|u(t)\|^2_H\dif t\right)^{\nicefrac{1}{2}},
\]
resepctively.


\section{Galerkin Time Discretizations}\label{disc}

In this section, we briefly recall the $hp$-cG and $hp$-dG time stepping schemes for the discretisation of~\eqref{eq:1}. To this end, on the open interval~$I=(0,T)$, we introduce a set of time nodes, $0 := t_0 < t_1 < \cdots < t_{M-1} < t_M := T$, which define a time partition $\M:=\{I_m\}_{m=1}^M$ of~$I$ into~$M$ open time intervals~$I_m:=(t_{m-1},t_m)$, $m=1,\ldots,M$. The length $k_m := t_m - t_{m-1}$ (which may be variable) of the time interval $I_m$ is called the time step length. Furthermore, to each interval $I_m$ we associate a polynomial degree~$r_m \in \mathbb{N}_0$ which takes the role of a local approximation order. Then, given a (real) Hilbert space~$X\subseteq H$ and some~$r\in\mathbb{N}_0$, the set
\[
\mathcal{P}^{r}(J;X):=\left\{p\in C(\bar J;X):\,p(t)=\sum_{i=0}^rx_it^i,\, x_i\in X\right\}\!,
\]
signifies the space of all polynomials of degree at most~$r$ on an interval~$J\subset\mathbb{R}$ with values in~$X$. 

In practical computations, the Hilbert space~$H$ on which~\eqref{eq:1} is based will typically be replaced by a finite-dimensional subspace~$H_m\subset H$ on each interval~$I_m$, $1\le m\le M$. In this context, it is useful to define the \emph{$H$\!-orthogonal projection} ~$\pi_m:\,H\to H_m$ given by
\begin{equation}\notag
x\mapsto\pi_m x,\qquad(x-\pi_mx,y)_H=0\qquad\forall y\in H_m.
\end{equation}

\subsection{$hp$-cG Time Stepping}

With the above definitions, the (fully-discrete) $hp$-cG time marching scheme is iteratively given as follows: For $1\le m\le M$, we seek $U|_{I_m}\in\mathcal{P}^{r_m}(I_m;H_m)$ such that
\begin{equation}
\begin{split}
 \int_{I_m} \bigg(\frac{\dif U}{\dif t},V\bigg)_H\dif t&=\int_{I_m} (\F(t,U),V)_H  \dif t \qquad\forall V\in\mathcal{P}^{r_m-1}(I_m;H_m),
  \label{eq:cG}
  \end{split}
\end{equation}
with the initial condition $U(t_{0}) =  \pi_1 u_0$ for $m=1$, and $U(t_{m-1}) =  \pi_m U(t_{m-1}^-)$ for $m \geq 2$; here, the one-sided limits of a piecewise continuous function $U$ at each time node $t_m$ are given by
\[
U(t_m^+) \coloneqq \lim_{s\searrow 0} U(t_m+s), \qquad\qquad
U(t_m^-) \coloneqq \lim_{s\searrow 0} U(t_m-s).
\]

Note that in order to enforce the initial condition on each time step, the local trial space has one degree of freedom more than the local test space in the $hp$-cG scheme. Furthermore, if~$H_1=\ldots=H_M$, we remark that the $hp$-cG solution~$U$ is globally continuous on~$[0,T]$. 

The strong form of \eqref{eq:cG} on ${I}_m$ is
\begin{equation}
\begin{split}
 U'(t) = \Pi^{r_m - 1}_m \F(t,{U}(t)),
  \label{eq:cGstrongsuboptimal}
\end{split}
\end{equation}
where $\Pi^{r_m - 1}_m$ denotes the $L^2$-projection operator onto the space $\mathcal{P}^{r_m-1}(I_m;H_m)$; see~\cite{JanssenWihler:15} for details. Whilst the strong form~\eqref{eq:cGstrongsuboptimal} can be exploited for the purpose of deriving \emph{a posteriori} error estimates, it is well known that employing such a straightfoward approach leads to suboptimal error estimates, cf. \cite{AkrivisMakridakisNochetto:09}. This issue will be addressed in the derivation of our error bound.

\subsection{$hp$-dG Time Stepping} 

The (fully-discrete) $hp$-dG time marching scheme is iteratively given as follows: For ~$1\le m\le M$, we seek $U|_{I_m} \in \mathcal{P}^{r_m}(I_m;H_m)$ such that
\begin{equation}
\begin{split}
  \int_{I_m} \bigg(\frac{\dif U}{\dif t},V\bigg)_H  \dif t 
  &+  ({\jump{U}_{m-1}}, V(t^+_{m-1}))_H
   = 
  \int_{I_m}(\F(t,U),V)_H \dif t\quad\forall V\in\mathcal{P}^{r_m}(I_m;H_m),
  \label{eq:dG}
  \end{split}
\end{equation}
where the discontinuity jump of $U$ at $t_m$, $0\le m \le M-1$, is given by ${\jump{U}_m} := U(t_m^+) - U(t_m^-)$ with~$U(t^-_0) := u_0$. We emphasize that, in contrast to the continuous Galerkin formulation, the trial and test spaces are the same for the dG scheme; this is due to the fact that the initial values are weakly imposed (by means of an upwind flux) on each time interval.

In order to find the strong formulation of ~\eqref{eq:dG}, we require the use of a \emph{lifting operator}. More precisely, given some real Hilbert space~$X\subseteq H$, we define~$\L^{r_m}_m:\,X\to\mathcal{P}^{r_m}(I_m;X)$ for $1\le m\le M$ uniquely through 
\begin{equation}\label{eq:liftingop}
z\mapsto \L^{r_m}_m z,\qquad
\int_{I_m}(\L^{r_m}_m z,V)_H\dif t=(z,V(t^+_{m-1}))_H\qquad\forall V\in\mathcal{P}^{r_m}(I_m;X),
\end{equation}
 cf.~\cite[Section~4.1]{ScWi10}. Then, in view of this definition with~$X=H_m$ and proceeding as in~\cite{JanssenWihler:15}, we obtain the strong formulation of the dG time stepping method \eqref{eq:dG} on ${I}_m$, viz.,
\begin{equation}\label{eq:dGstrong}
U'(t)+(\L^{r_m}_m\pi_m{\jump{U}_{m-1}})(t)= \Pi^{r_m}_m\F(t,{U}(t)),
\end{equation}
where~$\Pi^{r_m}_m$ denotes the $L^2$-projection onto~$\mathcal{P}^{r_m}(I_m;H_m)$.


\section{A Posteriori Error Analysis}

The goal of this section is to derive $L^\infty$ {\emph{a posteriori}} error bounds for the $hp$-cG and $hp$-dG approximations to~\eqref{eq:1}. {To this end}, we require some structural assumptions on the nonlinearity~$\F$. Specifically, $\F:[0,T]\times H\to H$ is assumed to satisfy $\F(\cdot,0) \in L^1((0,T);H)$ along with the \emph{local} $H$-Lipschitz estimate
\begin{equation}\label{eq:Lip}
\|\F(t,v)-\F(t,w)\|_H \leq  \mathcal{L}(t,\|v\|_H,\|w\|_H)\|v-w\|_H \qquad \forall t \in [0,T] \quad \forall v,w \in H.
\end{equation}
Here, $\mathcal{L}:[0,T]\times \mathbb{R}^+_0 \times \mathbb{R}^{+}_0 \to  \mathbb{R}^{+}_0$ is a \emph{known} function that satisfies $\mathcal{L}(\cdot,a,b) \in L^1((0,T);H)$ for any $a,b \in \mathbb{R}^+_0$ and that is continuous and monotone increasing in the second and third arguments. Under these assumptions on $\F$, problem \eqref{eq:1} admits a unique {(local)} solution $u \in C([0,T];H)$.

\subsection{Preliminary Error Bound}

In order to remedy the suboptimal error estimates that would result from using \eqref{eq:cGstrongsuboptimal} and \eqref{eq:dGstrong} directly, we follow the approach proposed in ~\cite{AkrivisMakridakisNochetto:09} by introducing the \emph{reconstruction} $\widehat{U}_m$ of $U |_{I_{m}}$ which is defined over each closed interval $\bar{I}_m$, $1 \leq m \leq M$, by
\begin{equation}
\begin{split}
\label{reconstruction}
\widehat{U}_m(t) :=\pi_m U(t^-_{m-1}) + \int_{t_{m-1}}^t \Pi_m^{r_m} \F (s,U)  \dif s.
\end{split}
\end{equation}
For $t \in I_m$, $1 \leq m \leq M$, we define the \emph{error} $e(t) := u(t) - U(t) \in H$ where~$U$ is either the $hp$-cG solution from~\eqref{eq:cG} or the $hp$-dG solution from~\eqref{eq:dG}. Since we will be dealing with the reconstruction $\widehat{U}_m$, it will also be necessary to introduce the \emph{reconstruction error} given by $\widehat{e}_m(t) := u(t) -\widehat{U}_m(t) \in H$, $t \in \bar{I}_m$, $1 \leq m \leq M$. We will proceed with the error analysis by first proving an $L^{\infty}$-error bound for $\widehat{e}_m$.

To formulate the error equation, we begin  by substituting \eqref{eq:1} into the definition of $\widehat{e}_m$, viz.,
\begin{equation}
\begin{split}
\widehat{e}_m(t) = u(t_{m-1}) - \widehat{U}_m(t) +  \int_{t_{m-1}}^t \F(s,u) \dif s.
\end{split}
\end{equation}
Adding and subtracting additional terms yields
\begin{equation}
\begin{split}
\widehat{e}_m(t) = \widehat{e}_m(t_{m-1}) + \mathcal{R}(t) +  \int_{t_{m-1}}^t \F(s,u) -\F(s,\widehat{U}_m) \dif s,
\end{split}
\end{equation}
where $\mathcal{R}$ denotes the \emph{residual} given by
\begin{equation}
\begin{split}
\mathcal{R}(t) := \widehat{U}_{m}(t_{m-1})- \widehat{U}_m(t)+  \int_{t_{m-1}}^t \F(s,\widehat{U}_m) \dif s, \qquad t \in {I}_m.
\end{split}
\end{equation}
Using the triangle inequality and Bochner's Theorem implies that 
\begin{equation}
\begin{split}
\|{\widehat{e}}_m(t)\|_H \leq \|\widehat{e}_m(t_{m-1})\|_H+ \|\mathcal{R}(t)\|_H+\int_{t_{m-1}}^t \|\F(s,u) - \F(s,\widehat{U}_m)\|_H \dif s.
\end{split}
\end{equation}
Moreover, applying the local $H$-Lipschitz estimate~\eqref{eq:Lip} together with the monotonicity of $\mathcal{L}$ yields
\begin{equation}
\begin{split}
\label{erroreqn1}
\|{\widehat{e}}_m(t)\|_H \leq \|\widehat{e}_m(t_{m-1})\|_H+ \eta^{\res}_m+\int_{t_{m-1}}^t \mathcal{L}(s,\|\widehat{e}_m\|_H+\|\widehat{U}_m\|_H,\|\widehat{U}_m\|_H)\|\widehat{e}_m\|_H \dif s,
\end{split}
\end{equation}
for $t \in \bar{I}_m$. Here, $\displaystyle \eta^{\res}_m$ denotes the \emph{residual estimator} given by $\displaystyle \eta^{\res}_m := \|\mathcal{R}\|_{L^{\infty}(I_m;H)}$.

On the first interval, {recalling that~$U(t_0^-)=u_0=u(t_0)$}, we can estimate $\|\widehat{e}_m(t_{m-1})\|_H$ directly {by} $\|\widehat{e}_1(t_0)\|_H = \eta_0^{\proj}$ where the \emph{projection estimator} $\eta^{\proj}_m$ is given by 
\begin{equation}\label{eq:etaproj}
\eta^{\proj}_m := \|U(t_m^-) - \pi_{m+1}U(t_{m}^-)\|_H, \qquad m\ge 0. 
\end{equation}
For later intervals, the unknown term $\|\widehat{e}_m(t_{m-1})\|_H$ needs to be replaced with the known term $\|\widehat{e}_{m-1}(t_{m-1})\|_H$. To this end, for $m \geq 1$, we have
\begin{equation}
\begin{split}
\label{reconstructiondiff}
\widehat{e}_{m+1}(t_{m}) - \widehat{e}_{m}(t_{m}) & = \widehat{U}_{m}(t_{m}) - \widehat{U}_{m+1}(t_{m}) \\
& = \pi_{m}U(t_{m-1}^-) - \pi_{m+1}U(t_{m}^-) + \int_{I_m} \Pi_{m}^{r_{m}} \F (s,U)  \dif s.
\end{split}
\end{equation}
Recall that the $hp$-cG method \eqref{eq:cG} satisfies the strong form \eqref{eq:cGstrongsuboptimal} on $I_m$. Thus, we have that
\begin{equation}
\begin{split}
{U}(t^-_{m}) =\pi_{m} U(t^-_{m-1}) + \int_{I_m} \Pi_{m}^{r_{m}-1} \F (s,U)  \dif s.
\end{split}
\end{equation}
By the definition of the $L^2$-projection an equivalent formulation of the above is
\begin{equation}
\begin{split}
{U}(t^-_{m}) =\pi_{m} U(t^-_{m-1}) + \int_{I_m} \Pi_{m}^{r_{m}} \F (s,U)  \dif s.
\end{split}
\end{equation}
Substituting this into \eqref{reconstructiondiff} implies that for the $hp$-cG method we have
\begin{equation}
\begin{split}
\label{cGdiff}
\widehat{e}_{m+1}(t_{m}) - \widehat{e}_{m}(t_{m}) = U(t_m^-) - \pi_{m+1}U(t_{m}^-).
\end{split}
\end{equation}
For the $hp$-dG method \eqref{eq:dG}, we have the strong form \eqref{eq:dGstrong} on $I_m$. Thus, it follows that
\begin{equation}
\begin{split}
{U}(t^-_{m}) =U(t^+_{m-1}) + \int_{I_m} \Pi_{m}^{r_{m}} \F (s,U)  \dif s - \int_{I_m} (\L^{r_m}_m\pi_m{\jump{U}_{m-1}})(s) \dif s.
\end{split}
\end{equation}
{By definition of the lifting operator~$\L^{r_m}_m$ from~\eqref{eq:liftingop} we arrive at}
\begin{equation}
\begin{split}
{U}(t^-_{m}) =U(t^+_{m-1})- \pi_m{\jump{U}_{m-1}} + \int_{I_m} \Pi_{m}^{r_{m}} \F (s,U)  \dif s.
\end{split}
\end{equation}
Equivalently,
\begin{equation}
\begin{split}
{U}(t^-_{m}) = \pi_m U(t^-_{m-1}) + \int_{I_m} \Pi_{m}^{r_{m}} \F (s,U)  \dif s.
\end{split}
\end{equation}
\noindent Since this is the same form as for the $hp$-cG method then for the $hp$-dG method we also have that
\begin{equation}
\begin{split}
\label{dGdiff}
\widehat{e}_{m+1}(t_{m}) - \widehat{e}_{m}(t_{m}) = U(t_m^-) - \pi_{m+1}U(t_{m}^-).
\end{split}
\end{equation}
Applying the triangle inequality to \eqref{cGdiff} and \eqref{dGdiff} yields
\begin{equation}\label{eq:split}
\begin{split}
\|\widehat{e}_{m+1}(t_{m})\|_H \leq \|\widehat{e}_{m}(t_{m})\|_H + \eta^{\proj}_{m},
\end{split}
\end{equation}
{with~$\eta^{\proj}_m$ from~\eqref{eq:etaproj}.} Substituting this result into \eqref{erroreqn1} gives
\begin{equation}
\begin{split}
\label{erroreqn2}
\|{\widehat{e}}_m(t)\|_H \leq \psi_m+\int_{t_{m-1}}^t \mathcal{L}(s,\|\widehat{e}_m\|_H+\|\widehat{U}_m\|_H,\|\widehat{U}_m\|_H)\|\widehat{e}_m\|_H \dif s,
\end{split}
\end{equation}
where~$\psi_m$ is chosen such that
\begin{equation}\label{eq:psi}
\psi_m  \ge \begin{cases}\displaystyle \eta^{\proj}_{m-1}+ \eta^{\res}_m \qquad & \text{if } m = 1 \\
\displaystyle \|\widehat{e}_{m-1}(t_{m-1})\|_H + \eta^{\proj}_{m-1}+ \eta^{\res}_m \qquad & \text{if } m \neq 1 \end{cases}.
\end{equation}
Finally, applying Gronwall's inequality to \eqref{erroreqn2} for $t \in \bar{I}_m$, $1 \leq m \leq M$, yields the following result.

\begin{proposition}
For the cG and dG time stepping schemes~\eqref{eq:cG} and~\eqref{eq:dG}, respectively, there holds the  error bound
\begin{equation}
\begin{split}
\label{eq:gronwall}
\|\widehat{e}_m(t)\|_H & \leq G_m(t)\psi_m,\qquad t \in \bar{I}_m,\quad 1 \leq m \leq M,
\end{split}
\end{equation}
where ~$\psi_m$ satisfies~\eqref{eq:psi} and $G_m$ is given by
\begin{equation}
\begin{split}
G_m(t)  := \exp \bigg (\int_{t_{m-1}}^{t} \mathcal{L}(s,\|\widehat{e}_m\|_H+\|\widehat{U}_m\|_H,\|\widehat{U}_m\|_H) \dif s \bigg).
\end{split}
\end{equation}
\end{proposition}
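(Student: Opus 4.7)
The plan is to apply a standard integral form of Gronwall's inequality to the estimate \eqref{erroreqn2}, since all of the structural work (deriving the reconstruction identity, handling the jump term for dG via the lifting operator, and combining the intermediate bounds into a single integral inequality) has already been carried out in the preceding derivation for both the cG and dG schemes simultaneously.

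Concretely, I would set $\phi(t) := \|\widehat{e}_m(t)\|_H$ and
\begin{equation*}
b(s) := \mathcal{L}\bigl(s,\|\widehat{e}_m(s)\|_H+\|\widehat{U}_m(s)\|_H,\|\widehat{U}_m(s)\|_H\bigr),
\end{equation*}
so that \eqref{erroreqn2} reads $\phi(t)\le \psi_m + \int_{t_{m-1}}^t b(s)\phi(s)\dif s$ for $t\in\bar I_m$, with $\psi_m\ge 0$ and $b\ge 0$ (the latter because $\mathcal{L}$ takes values in $\mathbb{R}^+_0$). The classical integral Gronwall lemma then yields
\begin{equation*}
\phi(t)\le \psi_m\exp\!\left(\int_{t_{m-1}}^t b(s)\dif s\right)=G_m(t)\,\psi_m,
\end{equation*}
which is exactly the asserted bound.

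The one mildly subtle point to verify before invoking Gronwall is the local integrability of $b$ on $I_m$. The exact solution $u$ belongs to $C([0,T];H)$ (by the assumed existence/uniqueness result for~\eqref{eq:1}) and the reconstruction $\widehat{U}_m$ is polynomial in $t$ with values in~$H$, hence continuous on the compact interval $\bar I_m$; consequently $\|\widehat{U}_m(s)\|_H$ and $\|\widehat{e}_m(s)\|_H$ are bounded on $\bar I_m$ by some constants~$A,B$. By the continuity and monotonicity of $\mathcal{L}$ in its second and third arguments we then have $0\le b(s)\le \mathcal{L}(s,A+B,B)$, and the latter upper envelope lies in $L^1(I_m)$ by the standing hypothesis on $\mathcal{L}$. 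This justifies the Gronwall step rigorously.

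I do not foresee a genuine obstacle: the only thing worth flagging is that $G_m$ is \emph{implicit} since its exponent still contains the unknown $\|\widehat{e}_m\|_H$ through $b$. This is consistent with the conditional nature of the a posteriori theory announced in the introduction and does not affect the Gronwall argument itself, which simply treats the integrand as a fixed measurable function of $s$. The statement therefore follows at once from \eqref{erroreqn2} together with the integral Gronwall inequality, uniformly in $m$ and for both the cG and dG schemes.
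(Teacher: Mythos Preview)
Your proposal is correct and matches the paper's approach exactly: the paper simply states that the proposition follows by applying Gronwall's inequality to~\eqref{erroreqn2}, and that is precisely what you do. Your additional check of the local integrability of the integrand~$b$ is a nice clarification that the paper omits.
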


\subsection{Continuation Argument}

In order to transform~\eqref{eq:gronwall} into an \emph{a posteriori} error bound, we will employ a continuation argument, cf., e.g.,~\cite{CGKM15}. To this end, for $1\le m\le M$, we define the set
\begin{equation}\label{eq:calIm}
\mathcal{I}_m:= \{t\in \bar{I}_m:\,\|\widehat{e}_m\|_{C([t_{m-1},t];H)}\le \delta\psi_m\},
\end{equation}
where $\delta > 1$ is a parameter to be chosen. Note that $\mathcal{I}_m$ is closed since $\widehat{e}_m$ is time-continuous and, obviously, bounded. Moreover, $\mathcal{I}_m$ is assumed to be non-empty; this is certainly true for the first interval since $0 \in \mathcal{I}_1$ and is \emph{a posteriori} verifiable for later intervals. To state the full error bound, we require some additional definitions. Specifically, define the function $\phi_m:[1,\infty) \rightarrow \mathbb{R}$ by 
\begin{equation}\label{eq:phi}
\phi_m(\delta):=\exp\left(\int_{I_m} \mathcal{L}(s,\delta \psi_m+\|\widehat{U}_m\|_H,\|\widehat{U}_m\|_H) \dif s\right)-\delta.
\end{equation}
Additionally, if it exists, we introduce 
\begin{equation}\label{eq:deltam}
\delta_m := \inf \{\delta > 1 : \phi_m(\delta) < 0\} \in [1, \infty),
\end{equation} 
for $m \geq 1$ and we let $\delta_0 := 1$ for convenience.

We are now ready to establish the {following} \emph{conditional a posteriori} error bound for both Galerkin time stepping methods. 
\begin{theorem}\label{maintheorem}
For any $m \geq 1$, if $\delta_1, \ldots, \delta_{m}$ exist then the $hp$-cG scheme~\eqref{eq:cG} and the $hp$-dG scheme~\eqref{eq:dG} satisfy the \emph{a posteriori} error bound
\begin{equation}
\label{eq:prelimerr}
\|\widehat{e}_m\|_{L^\infty(I_m;H)}\le\delta_m\psi_m,
\end{equation}
on $I_m$.
\end{theorem}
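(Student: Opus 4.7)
The approach is to proceed by induction on $m$, combining in each step the implicit Gronwall-type bound~\eqref{eq:gronwall} with a bootstrap/continuation argument on $I_m$. The subtlety is that the function $G_m$ in~\eqref{eq:gronwall} depends on the unknown $\|\widehat{e}_m\|_H$ through the argument of $\mathcal{L}$; the continuation step is designed to break this self-reference by confining $\|\widehat{e}_m\|_H$ to the envelope $\delta\psi_m$ singled out by the definition~\eqref{eq:calIm} of $\mathcal{I}_m$ and the zero of $\phi_m$.

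For the base case $m=1$, we have $\widehat{U}_1(t_0)=\pi_1 u_0$ and $u(t_0)=u_0$, so $\|\widehat{e}_1(t_0)\|_H=\eta^{\proj}_0\le\psi_1$; in particular $t_0\in\mathcal{I}_1$ for every $\delta>1$, so $\mathcal{I}_1$ is non-empty. For the inductive step, the hypothesis yields $\|\widehat{e}_{m-1}(t_{m-1})\|_H\le\delta_{m-1}\psi_{m-1}$ (the bound from~\eqref{eq:prelimerr} extends to the closed interval $\bar I_{m-1}$ by continuity of $\widehat e_{m-1}$), and one may take
\[
\psi_m := \delta_{m-1}\psi_{m-1}+\eta^{\proj}_{m-1}+\eta^{\res}_m,
\]
so that~\eqref{eq:psi} is satisfied; combined with~\eqref{eq:split} this gives $\|\widehat{e}_m(t_{m-1})\|_H\le\psi_m$, hence $t_{m-1}\in\mathcal{I}_m$ for every $\delta>1$.

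Now fix any $\delta>\delta_m$, so that $\phi_m(\delta)<0$ by the definition~\eqref{eq:deltam}, and set $t^\star:=\sup\mathcal{I}_m$; by closedness $t^\star\in\mathcal{I}_m$. Assume for contradiction that $t^\star<t_m$. Continuity of $t\mapsto\|\widehat{e}_m(t)\|_H$ forces $\|\widehat{e}_m\|_{C([t_{m-1},t^\star];H)}=\delta\psi_m$, since a strict inequality here would allow $\mathcal{I}_m$ to be extended slightly past $t^\star$, contradicting the supremum. Applying~\eqref{eq:gronwall} on $[t_{m-1},t^\star]$ and invoking the monotonicity of $\mathcal{L}$ in its second and third arguments to replace $\|\widehat{e}_m\|_H$ by $\delta\psi_m$ in the exponent, we obtain
\[
\|\widehat{e}_m\|_{C([t_{m-1},t^\star];H)} \le \exp\!\left(\int_{I_m}\mathcal{L}(s,\delta\psi_m+\|\widehat{U}_m\|_H,\|\widehat{U}_m\|_H)\dif s\right)\psi_m<\delta\psi_m,
\]
where the last strict inequality uses $\phi_m(\delta)<0$. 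This contradicts $\|\widehat{e}_m\|_{C([t_{m-1},t^\star];H)}=\delta\psi_m$, so $t^\star=t_m$ and $\|\widehat{e}_m\|_{L^\infty(I_m;H)}\le\delta\psi_m$. Letting $\delta\searrow\delta_m$ yields~\eqref{eq:prelimerr}.

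The main technical point is this continuation step: it is essential to work with the strict inequality $\phi_m(\delta)<0$ (i.e.\ $\delta>\delta_m$) and only pass to the limit at the very end, since it is exactly the strict slack in $\phi_m$ that closes the bootstrap. Beyond this, the argument is bookkeeping: the residual and projection contributions are absorbed into $\psi_m$ via the preliminary Gronwall bound, and the downward closure of $\mathcal{I}_m$ in $t$ (so that $\mathcal{I}_m$ is an interval anchored at $t_{m-1}$) ensures that the definition of $t^\star$ is unambiguous.
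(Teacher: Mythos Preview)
Your proof is correct and follows essentially the same continuation/bootstrap argument as the paper's own proof: show $t_{m-1}\in\mathcal{I}_m$, assume the maximal time $t^\star<t_m$, feed the envelope bound into~\eqref{eq:gronwall}, use monotonicity of~$\mathcal{L}$ and $\phi_m(\delta)<0$ to obtain a strict improvement, and pass to the limit $\delta\to\delta_m$. One small slip in wording: the implication ``$\delta>\delta_m\Rightarrow\phi_m(\delta)<0$'' is not justified by the infimum definition~\eqref{eq:deltam} alone (the set $\{\phi_m<0\}$ need not be an interval); you should instead fix any $\delta>1$ with $\phi_m(\delta)<0$---such $\delta$ exist arbitrarily close to $\delta_m$ by the definition of infimum---and then take the limit along those, exactly as the paper does.
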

\begin{proof}
We omit the trivial case~$\psi_m=0$. Since $\delta_m$ exists, there is some $\delta > 1$ with $\phi_m(\delta)<0$. Suppose that $\widehat{e}_m(t_{m-1})$ exists then $t_{m-1} \in \mathcal{I}_m$ so $\mathcal{I}_m$ is non-empty, closed and bounded and thus has some maximal time $t^{\star}$. Let us first make the assumption that $t^{\star} < t_m$ and work towards a contradiction. Substituting the error bound from $\mathcal{I}_m$ into \eqref{eq:gronwall} and using the monotonicity of $\mathcal{L}$ implies that 
\begin{equation}
\begin{split}
\notag
\|\widehat{e}_m\|_{C([t_{m-1},t^{\star}];H)} & \leq  \exp \bigg (\int_{I_m} \mathcal{L}(s,\delta\psi_m+\|\widehat{U}_m\|_H,\|\widehat{U}_m\|_H) \dif s \bigg)\psi_m=\left(\phi_m(\delta)+\delta\right)\psi_m<\delta\psi_m.
\end{split}
\end{equation}
This is a contradiction since we had assumed that the set $\mathcal{I}_m$ had maximal time $t^{\star}<t_m$. Hence, it follows that $\|\widehat{e}_m\|_{L^\infty(I_m;H)} \le \delta\psi_m$. Taking the limit~$\delta\to\delta_m$ we deduce~\eqref{eq:prelimerr}. To complete the proof, we note that we can conclude by recursion that $\widehat{e}_m(t_{m-1})$ exists if $\delta_0, \ldots, \delta_{m-1}$ exist and $0 \in \mathcal{I}_1$. Since $0 \in \mathcal{I}_1$ trivially and $\delta_0, \ldots, \delta_{m-1}$ exist by premise, the original assumption that $\widehat{e}_m(t_{m-1})$ exists is unneeded.  
\end{proof}

An important question arises with regard to Theorem \ref{maintheorem} {\bf --} is it possible that $\delta_m$ \emph{never} exists for certain nonlinearities $\F$? The following lemma provides an answer to this question. 

\begin{lemma}\label{lm:ex}
For any $m \geq 1$, if $\delta_0, \ldots, \delta_{m-1}$ exist and the time step length~$k_m>0$ is chosen sufficiently small then the set~$\{\delta>1:\,\phi_m(\delta)<0\}$ is non-empty, $\delta_m\ge1$ from~\eqref{eq:deltam} exists and~$\phi_m(\delta_m)=0$.
\end{lemma}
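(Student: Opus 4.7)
My plan is to view $\phi_m$ as a continuous function on $[1, \infty)$ with $\phi_m(1) \geq 0$, to exhibit some $\delta_* > 1$ where $\phi_m(\delta_*) < 0$ once $k_m$ is small enough, and then to pin down $\delta_m$ via continuity together with the infimum characterization.

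Continuity of $\phi_m$ follows from dominated convergence: for each $s$ the integrand in \eqref{eq:phi} is continuous and monotone increasing in $\delta$, and on any bounded range $[1, \delta_{\max}]$ it is dominated by $\mathcal{L}(\cdot, \delta_{\max}\psi_m + \|\widehat{U}_m\|_{L^\infty(I_m;H)}, \|\widehat{U}_m\|_{L^\infty(I_m;H)}) \in L^1(I_m)$. Composition with $\exp$ preserves continuity, and the non-negativity of $\mathcal{L}$ immediately yields $\phi_m(1) \geq 0$. The main step is to show that for $k_m$ small, $\phi_m$ is strictly negative somewhere. Fix $\delta_* := 2$. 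Since $\delta_0, \ldots, \delta_{m-1}$ exist by hypothesis, Theorem~\ref{maintheorem} and \eqref{eq:psi} bound $\psi_m$ by a constant $C_1$ independent of $k_m$ (the only $k_m$-dependent contribution being $\eta^{\res}_m$, which is controlled by an integral over $I_m$). Combining \eqref{reconstruction} with the Peano-type existence theory for the discrete Galerkin solutions~\cite{JanssenWihler:15} yields a bound $\|\widehat{U}_m\|_{L^\infty(I_m;H)} \leq C_2$ valid for all $k_m \leq k_{\max}$. Monotonicity of $\mathcal{L}$ then estimates the exponent in \eqref{eq:phi} at $\delta_*$ by $\int_{I_m} \mathcal{L}(s, \delta_* C_1 + C_2, C_2) \dif s$; since $\mathcal{L}(\cdot, \delta_* C_1 + C_2, C_2) \in L^1((0,T); H)$, absolute continuity of the Lebesgue integral makes this integral tend to $0$ as $k_m \to 0$. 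For $k_m$ small enough the exponent is thus less than $\ln \delta_*$, giving $\phi_m(\delta_*) < 0$.

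The set $\{\delta > 1 : \phi_m(\delta) < 0\}$ is then non-empty and bounded below by $1$, so $\delta_m \in [1, \delta_*]$ is well defined. For any $\delta \in (1, \delta_m)$ we must have $\phi_m(\delta) \geq 0$, else $\delta_m$ would not be the infimum; letting $\delta \nearrow \delta_m$ and using continuity gives $\phi_m(\delta_m) \geq 0$. Conversely, the infimum supplies a sequence $\delta_n \searrow \delta_m$ with $\phi_m(\delta_n) < 0$, and continuity yields $\phi_m(\delta_m) \leq 0$. Combining these inequalities proves $\phi_m(\delta_m) = 0$.

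The principal obstacle is obtaining the $k_m$-uniform bound $\|\widehat{U}_m\|_{L^\infty(I_m;H)} \leq C_2$: since $\widehat{U}_m$ is built from the implicit Galerkin solution $U|_{I_m}$ which itself depends on $k_m$ through \eqref{eq:cG}/\eqref{eq:dG}, one must invoke the discrete existence theory of~\cite{JanssenWihler:15}, or directly exploit that the integral term in \eqref{reconstruction} vanishes uniformly on $\bar I_m$ as $k_m \to 0$, so that $\widehat{U}_m$ converges to the constant $\pi_m U(t_{m-1}^-)$.
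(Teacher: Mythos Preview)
Your proof follows the same strategy as the paper's: fix some $\delta^\star>1$, argue that the exponent in~\eqref{eq:phi} tends to zero as $k_m\to 0$ so that $\phi_m(\delta^\star)<0$, and then invoke continuity together with $\phi_m(1)\ge 0$ to locate $\delta_m$ as a root. You supply detail the paper omits---an explicit dominated-convergence justification of continuity and a discussion of why $\psi_m$ and $\|\widehat U_m\|_{L^\infty(I_m;H)}$ remain bounded as $k_m$ shrinks---whereas the paper simply asserts that the exponential can be made smaller than $1+\tfrac12(\delta^\star-1)$ for $k_m$ sufficiently small and leaves the $k_m$-dependence of $U|_{I_m}$, $\widehat U_m$, and $\eta^{\res}_m$ implicit.
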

\begin{proof}
Since $\delta_0, \ldots, \delta_{m-1}$ exist, Theorem \ref{maintheorem} implies that $\psi_m$ exists and thus $\phi_m$ is well-defined. For fixed $\delta^\star>1$ and upon setting~$\epsilon:=\nicefrac12(\delta^\star-1)>0$, we can choose~$k_m>0$ small enough so that
\[
\exp\left(\int_{I_m} \mathcal{L}(s,\delta^{\star} \psi_m+\|\widehat{U}_m\|_H,\|\widehat{U}_m\|_H) \dif s\right)<1+\epsilon.
\]
A quick calculation reveals that $\phi_m(\delta^\star)<1+\epsilon-\delta^\star=\nicefrac12(1-\delta^\star)<0$. Therefore, for~$k_m>0$ sufficiently small, the set~$\{\delta>1:\,\phi_m(\delta)<0\}$ is non-empty. Furthermore, since $\phi_m$ is continuous and $\phi_m(1) \geq 0$, it follows that~$\delta_m$ exists and satisfies~$\phi_m(\delta_m) = 0$. 
\end{proof}

In some sense, $\widehat{U}_m$ is a better approximation to $u |_{I_m}$ than $U |_{I_m}$; thus from a practical standpoint it is often best to use Theorem \ref{maintheorem} directly, however, for some applications a bound on the error rather than on the reconstruction error may be required so we introduce the following corollary.

\begin{corollary}\label{maintheoremcorollary}
For any $m \geq 1$, if $\delta_1, \ldots, \delta_{m}$ exist then the $hp$-cG scheme~\eqref{eq:cG} and the $hp$-dG scheme~\eqref{eq:dG} satisfiy the \emph{a posteriori} error bound
\begin{equation}
\|{e}\|_{L^\infty(I_m;H)}\le\delta_m\psi_m + \|U - \widehat{U}_m\|_{L^{\infty}(I_m;H)},
\end{equation}
on $I_m$.
\end{corollary}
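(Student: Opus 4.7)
The plan is to reduce the bound on $e = u - U$ to the bound on the reconstruction error $\widehat{e}_m = u - \widehat{U}_m$ already established in Theorem~\ref{maintheorem}, using nothing more than the triangle inequality. The key observation is the pointwise decomposition
$$e(t) = u(t) - U(t) = \bigl(u(t) - \widehat{U}_m(t)\bigr) + \bigl(\widehat{U}_m(t) - U(t)\bigr) = \widehat{e}_m(t) + \bigl(\widehat{U}_m(t) - U(t)\bigr),$$
which holds for every $t \in \bar{I}_m$ by the definitions of $e$ and $\widehat{e}_m$.

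Taking $H$-norms pointwise in $t$, applying the triangle inequality, and then passing to the essential supremum over $I_m$ would yield
$$\|e\|_{L^\infty(I_m;H)} \le \|\widehat{e}_m\|_{L^\infty(I_m;H)} + \|\widehat{U}_m - U\|_{L^\infty(I_m;H)}.$$
Under the standing hypothesis that $\delta_1,\ldots,\delta_m$ exist, Theorem~\ref{maintheorem} is directly applicable on $I_m$ and delivers $\|\widehat{e}_m\|_{L^\infty(I_m;H)} \le \delta_m \psi_m$. Substituting this into the previous display produces the claimed bound.

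There is no substantial obstacle here; the corollary is a one-line consequence of Theorem~\ref{maintheorem} and the triangle inequality, and in particular it requires no new continuation argument, no additional smallness condition on the time step $k_m$, and no further structural assumption on the nonlinearity~$\F$ beyond what was already in force for Theorem~\ref{maintheorem}. The only point worth emphasizing is that the additional term $\|U - \widehat{U}_m\|_{L^\infty(I_m;H)}$ is fully computable from the discrete solution~$U$ and the explicit reconstruction~$\widehat{U}_m$ defined in~\eqref{reconstruction}, so the resulting estimate genuinely retains its \emph{a posteriori} character and can be evaluated alongside~$\eta^{\res}_m$ and~$\eta^{\proj}_m$ in the adaptive algorithm.
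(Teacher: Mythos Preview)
Your proposal is correct and matches the paper's own proof essentially line for line: the paper also writes $e = \widehat{e}_m + \widehat{U}_m - U$, applies the triangle inequality, and invokes Theorem~\ref{maintheorem}. Your additional remarks about computability of the extra term are accurate and in the spirit of the surrounding discussion.
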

\begin{proof}
The bound follows directly from rewriting the error, viz., $e = \widehat{e}_m + \widehat{U}_m - U$, the triangle inequality and the reconstruction error bound of Theorem \ref{maintheorem}.
\end{proof}

\subsection{Computable Error Bound}
In order to yield fully computable error bounds we must give an explicit characterization of $\psi_m$ from~\eqref{eq:psi}. Theorem \ref{maintheorem} implies that
\begin{equation}
\|\widehat{e}_{m-1}(t_{m-1})\|_H \leq \|\widehat{e}_{m-1}\|_{L^{\infty}(I_{m-1};H)} \leq \delta_{m-1}\psi_{m-1}.
\end{equation}
Thus, we can define~$\psi_m$ by 
\begin{equation}\label{psi-def}
\psi_m  := \begin{cases}\displaystyle \eta^{\proj}_{m-1}+ \eta^{\res}_m \qquad & \text{if } m = 1 \\
\displaystyle \delta_{m-1}\psi_{m-1}+ \eta^{\proj}_{m-1}+ \eta^{\res}_m \qquad & \text{if } m \neq 1 \end{cases}.
\end{equation}
Defining $\psi_m$ in this way yields a \emph{recursive error estimator} and makes the error bounds of Theorem \ref{maintheorem} and Corollary \ref{maintheoremcorollary} \emph{fully computable}. 

In order to develop adaptive algorithms that can approximate the blow-up time of a nonlinear initial value problem, it is important to interpret the role that~$\delta_m$ plays in the error bound of Theorem \ref{maintheorem}. Recalling the bound and applying~\eqref{psi-def}, we see that the reconstruction error on $I_m$ satisfies
\begin{equation}
\begin{split}
\|\widehat{e}_m\|_{L^{\infty}(I_m;H)} \leq \delta_m (\delta_{m-1}\psi_{m-1}+ \eta^{\proj}_{m-1}+ \eta^{\res}_m).
\end{split}
\end{equation}
Of the three components that make up the error estimator, the term $\delta_{m-1}\psi_{m-1}$ is a bound for the error on the previous time step while $\displaystyle \eta^{\proj}_{m-1}+ \eta^{\res}_m$ represents the local (additive) contribution to the error estimator on the current time step. The correct interpretation of $\delta_m$, then, is that it is an \emph{a posteriori} approximation to the growth rate of the error on $I_m$; this becomes clear upon consideration of the following simple example. In fact, the following corollary shows that $\delta_m$ is the expected local growth rate for globally $H$-Lipschitz $\F$.

\begin{corollary}\label{HLipschitz}
Suppose that $\F$ is globally $H$-Lipschitz with constant $\mathcal{L} = L \ge 0$ then $\delta_m = \mathrm{e}^{Lk_m}$ and, thus, the error bound of Theorem \ref{maintheorem} holds unconditionally on $I_m$, viz.,
\begin{equation}
\|\widehat{e}_m\|_{L^\infty(I_m;H)}\le \mathrm{e}^{Lk_m}\psi_m.
\end{equation}
\end{corollary}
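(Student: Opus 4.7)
The plan is purely computational: the global Lipschitz hypothesis collapses the function $\phi_m$ to an elementary expression, from which $\delta_m$ can be read off directly, and the unconditionality follows by induction.

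First I would substitute $\mathcal{L}(s,a,b) = L$ into the definition~\eqref{eq:phi} of $\phi_m$. Because $L$ is constant in $s$, $a$, and $b$, the integral over $I_m$ simply produces $L k_m$, and therefore
\begin{equation*}
\phi_m(\delta) = \mathrm{e}^{L k_m} - \delta, \qquad \delta \ge 1.
\end{equation*}
The inequality $\phi_m(\delta) < 0$ is thus equivalent to $\delta > \mathrm{e}^{L k_m}$, so the set $\{\delta>1 : \phi_m(\delta)<0\}$ is non-empty (it contains, e.g., $\mathrm{e}^{L k_m}+1$), and from~\eqref{eq:deltam} we identify
\begin{equation*}
\delta_m = \inf\{\delta>1 : \delta > \mathrm{e}^{L k_m}\} = \max\{1,\mathrm{e}^{L k_m}\} = \mathrm{e}^{L k_m},
\end{equation*}
where the final equality uses $L\ge 0$ and $k_m>0$ so that $\mathrm{e}^{Lk_m}\ge 1$. (The borderline case $L=0$ gives $\phi_m(\delta)=1-\delta$ and $\delta_m = 1 = \mathrm{e}^{0}$, consistent with the formula.)

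Next I would verify that Theorem~\ref{maintheorem} applies on every interval without any smallness assumption on $k_m$. Since $\delta_0 := 1$ by convention and the argument just given shows that whenever $\psi_m$ is well-defined one has $\delta_m = \mathrm{e}^{Lk_m} < \infty$, a straightforward induction on $m$ (using the recursive definition~\eqref{psi-def} of $\psi_m$) shows that $\delta_1,\ldots,\delta_m$ all exist for every $m\ge 1$, independent of the discretisation parameters. Applying Theorem~\ref{maintheorem} then yields the unconditional bound $\|\widehat{e}_m\|_{L^\infty(I_m;H)} \le \mathrm{e}^{Lk_m}\psi_m$.

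There is no serious obstacle here; the only point requiring a moment of care is the degenerate case $L=0$, where one should confirm that the infimum in~\eqref{eq:deltam} is still attained as a limit and equals $1$, matching $\mathrm{e}^{Lk_m}$.
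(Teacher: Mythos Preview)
Your proposal is correct and follows essentially the same route as the paper: substitute the constant $\mathcal{L}=L$ into~\eqref{eq:phi} to obtain $\phi_m(\delta)=\mathrm{e}^{Lk_m}-\delta$, read off $\delta_m=\mathrm{e}^{Lk_m}$ from~\eqref{eq:deltam}, and conclude unconditionality from the fact that $\delta_m$ always exists. Your explicit induction on $m$ and the treatment of the degenerate case $L=0$ are slightly more detailed than the paper's version, but the argument is the same.
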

\begin{proof}
From the definition of the function $\phi_m$, it follows that $\phi_m(\delta) = \mathrm{e}^{L k_m} - \delta$. Therefore, $\displaystyle \delta_m = \inf \{\delta > 1 : \mathrm{e}^{L k_m} - \delta < 0 \} = \mathrm{e}^{L k_m}$. Since $\delta_m$ exists and is finite for any $k_m$, we conclude that the error bound of Theorem \ref{maintheorem} holds unconditionally on $I_m$.
\end{proof}


\section{Adaptive Algorithms}

The error estimators derived in the previous section will form the basis of an adaptive strategy to estimate the blow-up time of \eqref{eq:1}. In particular, we will consider both a $h$-adaptive approach and an $hp$-adaptive approach. For the remainder of this section, we assume that $H_1 = \ldots = H_M$ for simplicity but remark that both adaptive algorithms can be easily modified to account for variable finite-dimensional subspaces.

\subsection{A $h$-Adaptive Approach}

The first difficulty encountered  in the construction of a $h$-adaptive algorithm is that both \eqref{eq:cG} and \eqref{eq:dG} are implicit methods which means that the existence of a numerical solution is not guaranteed. It is tempting to conduct an existence analysis such as in \cite{JanssenWihler:15} to obtain bounds on the length of the time steps needed in order to yield a numerical solution, however, such bounds are inherently pessimistic. Since existence can be determined \emph{a posteriori}, our $h$-adaptive algorithm just reduces the time step length until a numerical solution exists.

The second difficulty is how to approximate $\delta_m$; unfortunately, it is impossible to give a precise characterization for how to do this for any given $\F$ primarily because $\F$ may be `pathological'.  Fortunately, however, most nonlinearities of interest do not fall into this category. Suppose that $\F$ is chosen such that $\phi_m$ has, at most, a small finite number of zeros then it should be possible to approximate the zeros of $\phi_m$ numerically. Since $\delta_m$ satisfies $\phi_m(\delta_m) = 0$, we then only need to check the roots of $\phi_m$ and verify that one of our numerical approximations, $\displaystyle \tilde{\delta}_m$, satisfies ${\displaystyle \phi_m(\tilde{\delta}_m) < 0}$. In our numerical experiments, we employ a Newton method to find $\tilde{\delta}_m$  with an initial guess close to one on $I_1$ (the proof of Lemma \ref{lm:ex} implies that $\delta_m \approx 1$ for $k_m \approx 0$) and an initial guess close to ${\delta}_{m-1}$ on $I_{m}$ for $m \geq 2$; this approach works well for the studied problems.
   
As is standard in finite element algorithms for linear problems, the time step is halved and the numerical solution recomputed until $\displaystyle \eta^{\res}_m$ is below the \emph{tolerance} $\tt tol$, however, we must also account for the nonlinear scaling that enters through $\delta_m$. The structure of the error estimator implies that the interval $I_1$ is the most important since the residual estimator on $I_1$ propagates through the remainder of the error estimator. Similarly, each successive subinterval $I_m$ is less important than the previous subinterval $I_{m-1}$ with the term $\delta_{m-1}$ measuring the extent to which this is true. To account for this, we increase the tolerance by the factor $\delta_m$ after computations on $I_m$ are complete.

We then advance to the next interval using the previous time step length as a reference and continue in this way until $\delta_m$ no longer exists; the adaptive algorithm is then terminated and it outputs the total number of degrees of freedom (DoFs) used along with the sum of all the time step lengths, $T$, as an estimate for $T_{\infty}$. The pseudocode for the $h$-adaptive algorithm is given in Algorithm 1.

\begin{algorithm} \label{algorithm1}
  \begin{algorithmic}[1]
     \State {\bf Input:} $\F$, $u_0$, $r$, $k_1$, ${\tt tol}$.
               \State Set~$r_1= r$, attempt to compute $U |_{I_1}$.
     \While {$U |_{I_1}$ does not exist}
          \State $k_1 \leftarrow \nicefrac{k_1}{2} $, and attempt to compute $U |_{I_1}$.
\EndWhile

\State Compute $\eta^{\res}_1$.

\While {$\eta^{\res}_1 > {\tt tol}$}
          \State $\displaystyle k_1 \leftarrow \nicefrac{k_1}{2}$,
          compute $U |_{I_1}$
          and determine $\eta^{\res}_1$.
\EndWhile

\State Set $m = 0$ and attempt to compute ${\delta}_{1}$.
     \While {${\delta}_{m+1}$ exists}
     \State $m \leftarrow m+1$,
     ${\tt tol} \leftarrow {\delta}_m*{\tt tol}$,
     $r_{m+1} = r_m$,
     $k_{m+1} = k_m$.
              \State Attempt to compute $U |_{I_{m+1}}$.
     \While {$U |_{I_{m+1}}$ does not exist}
          \State $k_{m+1} \leftarrow \nicefrac{k_{m+1}}{2}$,
          and attempt to compute $U |_{I_{m+1}}$.
\EndWhile

\State Compute $\eta^{\res}_{m+1}$.

\While {$\eta^{\res}_{m+1} > {\tt tol}$}
          \State $\displaystyle k_{m+1} \leftarrow \nicefrac{k_{m+1}}{2}$,
          compute $U |_{I_{m+1}}$
          and determine $\eta^{\res}_{m+1}$.
\EndWhile

\State Attempt to compute ${\delta}_{m+1}$.
\EndWhile
\State {\bf Output:} $m$, $t_{m}$.
  \end{algorithmic}
  \caption{$h$-adaptive time stepping algorithm (fixed polynomial degree~$r$)}
\end{algorithm}

\subsection{An $hp$-Adaptive Approach}

The basic outline of the $hp$-adaptive algorithm will be the same as that of the $h$-adaptive algorithm; the only difficulty lies in how we choose locally between $h$-refinement and $p$-refinement. The theory of the $hp$-FEM suggests that $p$-refinement is superior to $h$-refinement if the solution is `smooth enough', so we perform $p$-refinement if $U |_{I_m}$ is smooth; otherwise, we do $h$-refinement. The pseudocode for the $hp$-adaptive algorithm is very similar to Algorithm~1; the difference lies in replacing the simple time step bisections in lines~(8:) and~(19:) by the following procedure:
\begin{algorithmic}[]
\If {$U|_{I_{m+1}}$ is smooth}
\State $r_{m+1} \leftarrow r_{m+1}+1$.
          \Else
          \State $\displaystyle k_{m+1} \leftarrow \nicefrac{k_{m+1}}{2}$.
          \EndIf
          \State Compute $U |_{I_{m+1}}$ and determine $\eta^{\res}_{m+1}$.
\end{algorithmic}

In the numerical experiments, we consider only real-valued ODEs, and so we remark specifically on the estimation of smoothness in this special case. There are many ways to estimate smoothness of the numerical solution (see \cite{MM14} for an overview); we choose to use a computationally simple approach from \cite{FWW14} based on Sobolev embeddings. Here, the basic idea is to monitor the constant in the Sobolev embedding~$H^1(I_m)\hookrightarrow L^\infty(I_m)$ in order the classify a given function as  locally smooth or not. Specifically, we define the smoothness indicator~$\theta_m:\,H^1(I_m)\to[0,1]$ by
\begin{equation}
\begin{split}
\theta_m[u] := \begin{cases}
\displaystyle \|u\|_{L^{\infty}(I_m)} \left(k^{-\nicefrac{1}{2}}_m\|u\|_{L^2(I_m)} + \frac{1}{\sqrt{2}}k_m^{\nicefrac{1}{2}}\|u'\|_{L^2(I_m)} \right)^{-1} \qquad & \text{if } u \not\equiv 0,\\[2ex]
\displaystyle 1& \text{if } u\equiv 0,
\end{cases}
\end{split}
\end{equation}
which, intuitively, takes values close to one if~$u$ is smooth and values close to zero otherwise; see the reasoning of \cite{FWW14} for details. Following~\cite{FWW14}, we characterize $U|_{I_m} \in \mathcal{P}^{r_m}(I_m;\mathbb{R})$ as smooth if
\begin{equation}
\begin{split}
\theta_m \left[ \frac{\dif^{r_m-1} U}{\dif t^{r_m-1}} \right] \geq \theta^{\star};
\end{split}
\end{equation}
here, values around $\theta^{\star} = 0.85$ were observed to produce the best results in our numerical experiments below.

We conclude this section with a corollary on the magnitude of the reconstruction error under either the $h$-adaptive algorithm or the $hp$-adaptive algorithm. In order to state the corollary, we require some additional notation. To that end, we denote the \emph{initial tolerance} by $\tt tol^*$ and define the \emph{a posteriori} approximation to the growth rate of the error on $(0,t_m)$ by 
\[
\widehat{\delta}_m := \prod_{i=0}^m \delta_i.
\]

We are now ready to state the corollary.
\begin{corollary}
\label{algorithmbound}
Suppose that $H_1=\ldots=H_M$ and that $\delta_1, \ldots, \delta_M$ exist then under the $h$-adaptive algorithm or the $hp$-adaptive algorithm, the reconstruction error satisfies
\begin{equation}
\begin{split}
\notag
\max_{1 \leq m \leq M} \|\widehat{e}_m\|_{L^{\infty}(I_m;H)} \leq M \widehat{\delta}_M \tt \tol^\star.
\end{split}
\end{equation}
\end{corollary}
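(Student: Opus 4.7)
The plan is to chain together three ingredients: (i) Theorem \ref{maintheorem}, which hands us the per-interval bound $\|\widehat{e}_m\|_{L^\infty(I_m;H)}\le\delta_m\psi_m$; (ii) the vanishing of the projection estimators under the common-subspace hypothesis; and (iii) the algorithmic tolerance schedule, which forces $\eta^{\res}_m$ to decrease geometrically at a rate dictated by the running product of the $\delta_i$'s.

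First I would observe that, since $H_1=\ldots=H_M=:H^\star$, the projection $\pi_{m+1}$ acts as the identity on $H_m\subset H^\star$; consequently $\eta^{\proj}_m=\|U(t_m^-)-\pi_{m+1}U(t_m^-)\|_H=0$ for every $m\ge 1$, and also $\eta^{\proj}_0=0$ provided $u_0\in H^\star$ (which is the standing convention when the discrete subspace is fixed). The recursive definition \eqref{psi-def} then collapses to $\psi_1=\eta^{\res}_1$ and $\psi_m=\delta_{m-1}\psi_{m-1}+\eta^{\res}_m$ for $m\ge 2$.

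Next I would track the tolerance through Algorithm~1. On $I_1$ the inner while-loop enforces $\eta^{\res}_1\le\tol^\star$. Each subsequent iteration updates $\tol\leftarrow\delta_m\tol$ \emph{before} starting the refinement loop on $I_{m+1}$, so by an easy induction the tolerance active on $I_m$ equals $\bigl(\prod_{i=1}^{m-1}\delta_i\bigr)\tol^\star=\widehat{\delta}_{m-1}\tol^\star$ (using $\delta_0=1$). Hence the algorithm guarantees
\begin{equation*}
\eta^{\res}_m\le \widehat{\delta}_{m-1}\tol^\star,\qquad 1\le m\le M.
\end{equation*}

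Unrolling the recursion for $\psi_m$ gives $\psi_m=\sum_{k=1}^m\bigl(\prod_{j=k}^{m-1}\delta_j\bigr)\eta^{\res}_k$, where the empty product for $k=m$ is one. Inserting the bound on $\eta^{\res}_k$, each summand telescopes to $\prod_{j=1}^{m-1}\delta_j\,\tol^\star=\widehat{\delta}_{m-1}\tol^\star$, and there are $m$ such terms, yielding $\psi_m\le m\,\widehat{\delta}_{m-1}\tol^\star$. Multiplying by $\delta_m$ and invoking Theorem \ref{maintheorem} produces $\|\widehat{e}_m\|_{L^\infty(I_m;H)}\le m\,\widehat{\delta}_m\tol^\star$; taking the maximum over $m$ and using $\widehat{\delta}_m\le\widehat{\delta}_M$ (all $\delta_i\ge 1$) together with $m\le M$ delivers the claimed bound. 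The $hp$-adaptive variant is handled identically, since $p$-enrichment preserves the residual-tolerance check but does not alter the recursion structure. The only nontrivial point is verifying the bookkeeping of the tolerance update relative to the index shift $m\leftarrow m+1$ inside the outer while-loop; once that is pinned down the argument is purely algebraic.
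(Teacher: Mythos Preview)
Your proposal is correct and follows essentially the same route as the paper: both arguments reduce the recursion to $\psi_m=\delta_{m-1}\psi_{m-1}+\eta^{\res}_m$ (the paper drops the projection term silently, you justify it via $H_1=\ldots=H_M$), use the algorithmic schedule $\eta^{\res}_m\le\widehat{\delta}_{m-1}\tol^\star$, and arrive at $\psi_m\le m\,\widehat{\delta}_{m-1}\tol^\star$ before invoking Theorem~\ref{maintheorem} and taking the maximum. The only cosmetic difference is that the paper verifies $\psi_m\le m\,\widehat{\delta}_{m-1}\tol^\star$ by a one-line induction, whereas you unroll the recursion into a telescoping sum; these are equivalent presentations of the same computation.
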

\begin{proof}
To begin the proof, we will first prove by induction that $\psi_m \leq m\widehat{\delta}_{m-1} \tt tol^\star$. For $I_1$, we have that $\psi_1 = \eta^{\res}_1 \leq \tt tol^\star$, so the base case is verified. Assuming that the bound is true for $m-1$, then recalling the definition of $\psi_m$ from \eqref{psi-def} as well as the scaling nature of the tolerances in the $h$-adaptive and $hp$-adaptive algorithms, that is, $\eta^{\res}_m\le\widehat\delta_{m-1}{\tt tol}^\star$, we have
\begin{equation}
\begin{split}
\notag
\psi_m & = \delta_{m-1}\psi_{m-1}+ \eta^{\res}_m \\
&\leq (m-1)\delta_{m-1}\widehat{\delta}_{m-2}{\tt tol}^\star + \widehat{\delta}_{m-1}{\tt tol}^\star \\
&= (m-1)\widehat{\delta}_{m-1}{\tt tol}^\star+\widehat{\delta}_{m-1}\tt tol^\star \\
&= m\widehat{\delta}_{m-1}\tt tol^\star.
\end{split}
\end{equation}
Thus the stated bound holds for any $1 \leq m \leq M$. \mbox{Theorem \ref{maintheorem} combined with this bound yields}
\begin{equation}
\begin{split}
\notag
\max_{1 \leq m \leq M} \|\widehat{e}_m\|_{L^{\infty}(I_m;H)} \leq \max_{1 \leq m \leq M} \delta_m \psi_m \leq \max_{1 \leq m \leq M} m \widehat{\delta}_{m} {\tt tol}^\star = M\widehat{\delta}_M {\tt tol}^\star.
\end{split}
\end{equation}
This completes the proof.
\end{proof}


\section{Numerical Experiments}\label{numex}

In this section, we will apply the adaptive algorithms developed in the previous section to two real-valued initial value problems. In both numerical experiments, we approximate the implicit Galerkin methods \eqref{eq:cG} and \eqref{eq:dG} with an explicit Picard-type iteration; cf.~\cite{JanssenWihler:15}.
 
\subsection{Example 1}
 
In this numerical example, we consider \eqref{eq:1} with the polynomial nonlinearity $\F(t,u) = u^2$. Through separation of variables the exact solution is given by 
\begin{equation}
\begin{split}
\notag
u(t) = \frac{u_0}{1-u_0t},
\end{split}
\end{equation}
which has blow-up time given by $T_{\infty} = u_0^{-1}$. Note that for any $v_1, v_2 \in \mathbb{R}$, the nonlinearity $\F$ satisfies
\begin{equation}
\begin{split}
|\F(v_1)-\F(v_2)| = |v_1^2 - v_2^2 | \leq |v_1 - v_2|(|v_1|+|v_2|),
\end{split}
\end{equation}
so we set $\mathcal{L}(|v_1|,|v_2|) = |v_1|+|v_2|$. Thus, in this case, we have
\begin{equation}
\begin{split}
\phi_m(\delta) = \exp\bigg(k_m\delta \psi_m+2 \int_{I_m} |\widehat{U}_m| \dif s \bigg)-\delta
\end{split}
\end{equation}
in \eqref{eq:phi}.
 
\begin{figure}
\centering
\includegraphics[scale=0.265]{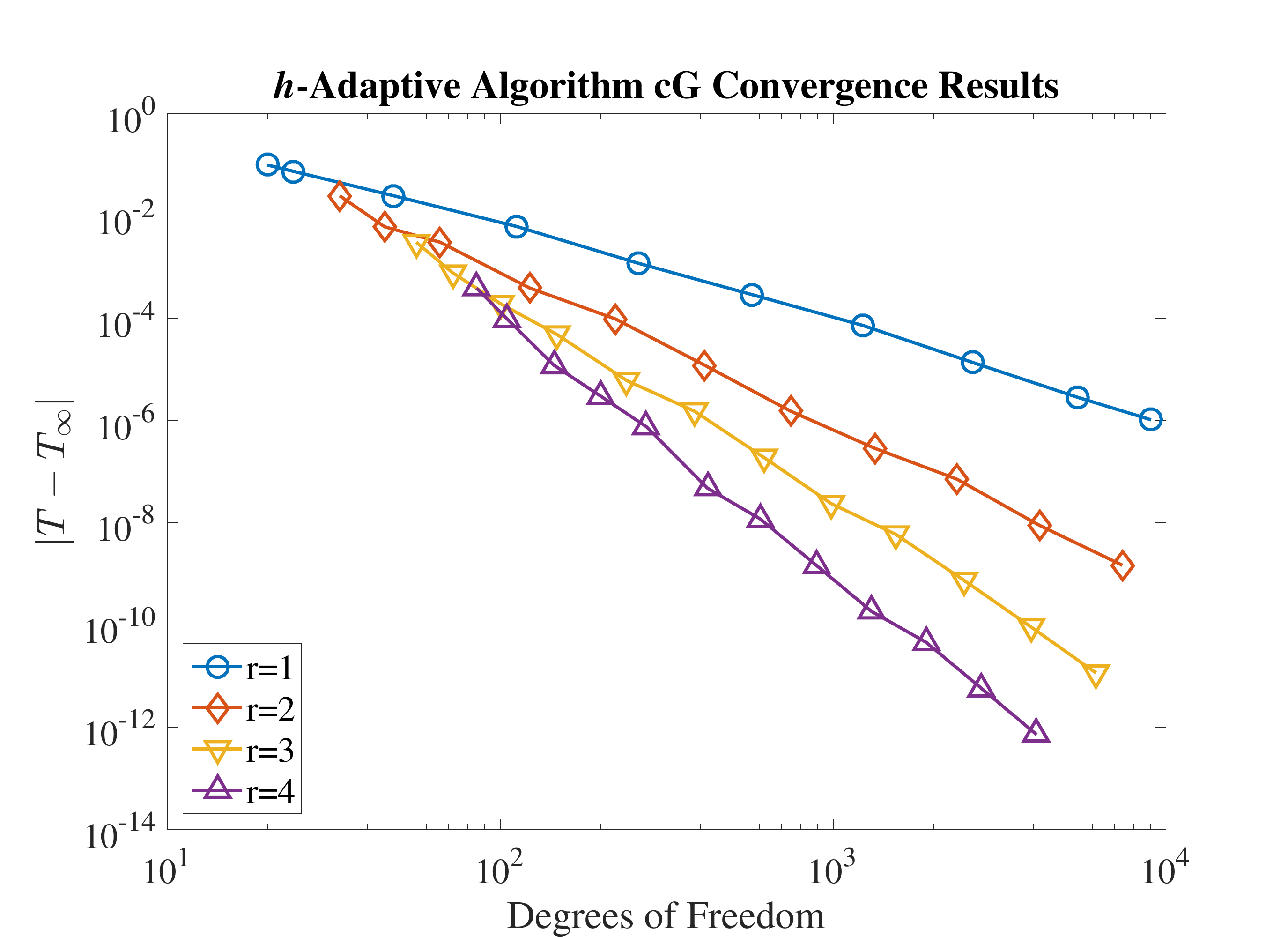} \includegraphics[scale=0.265]{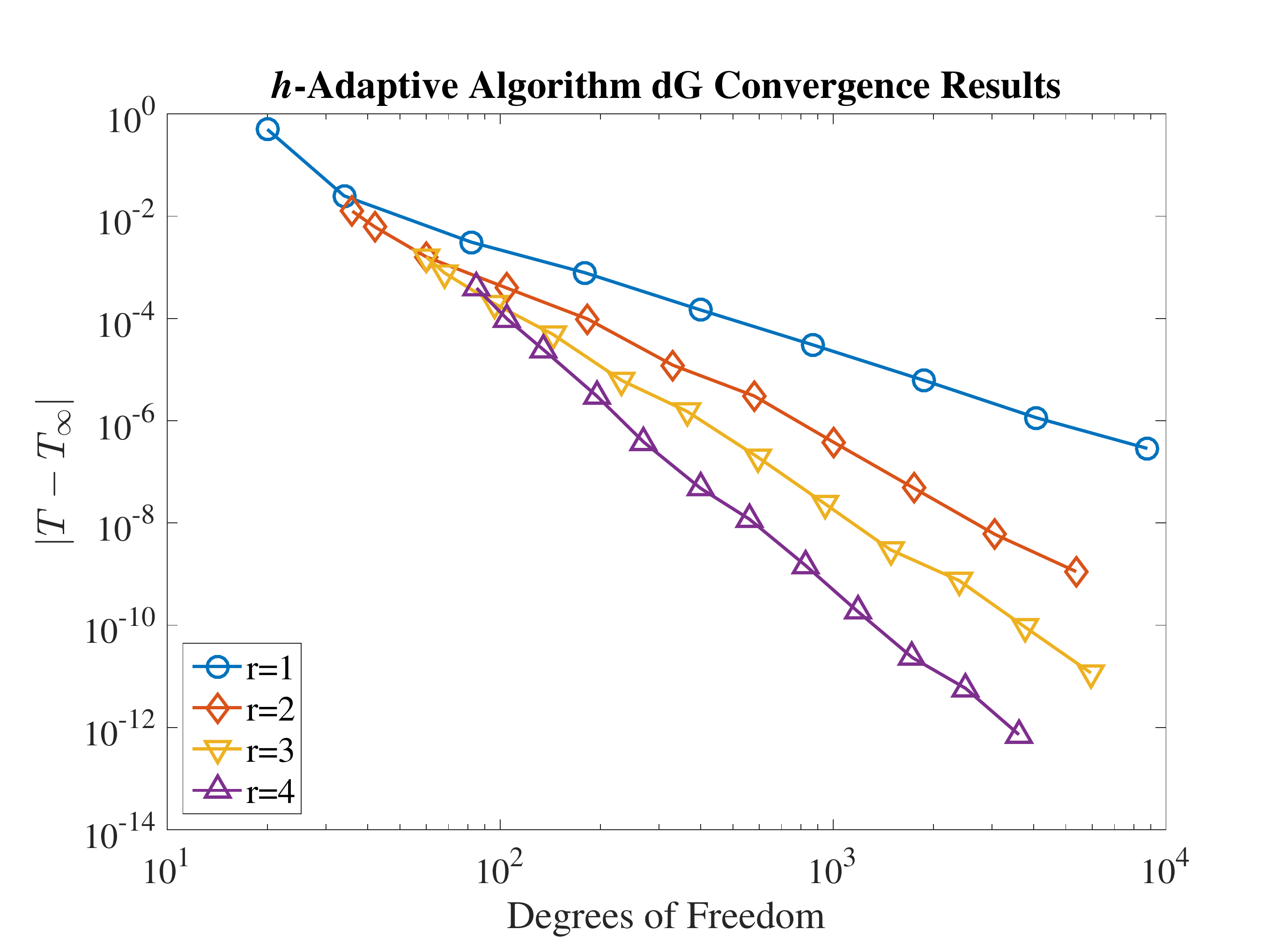}
\includegraphics[scale=0.265]{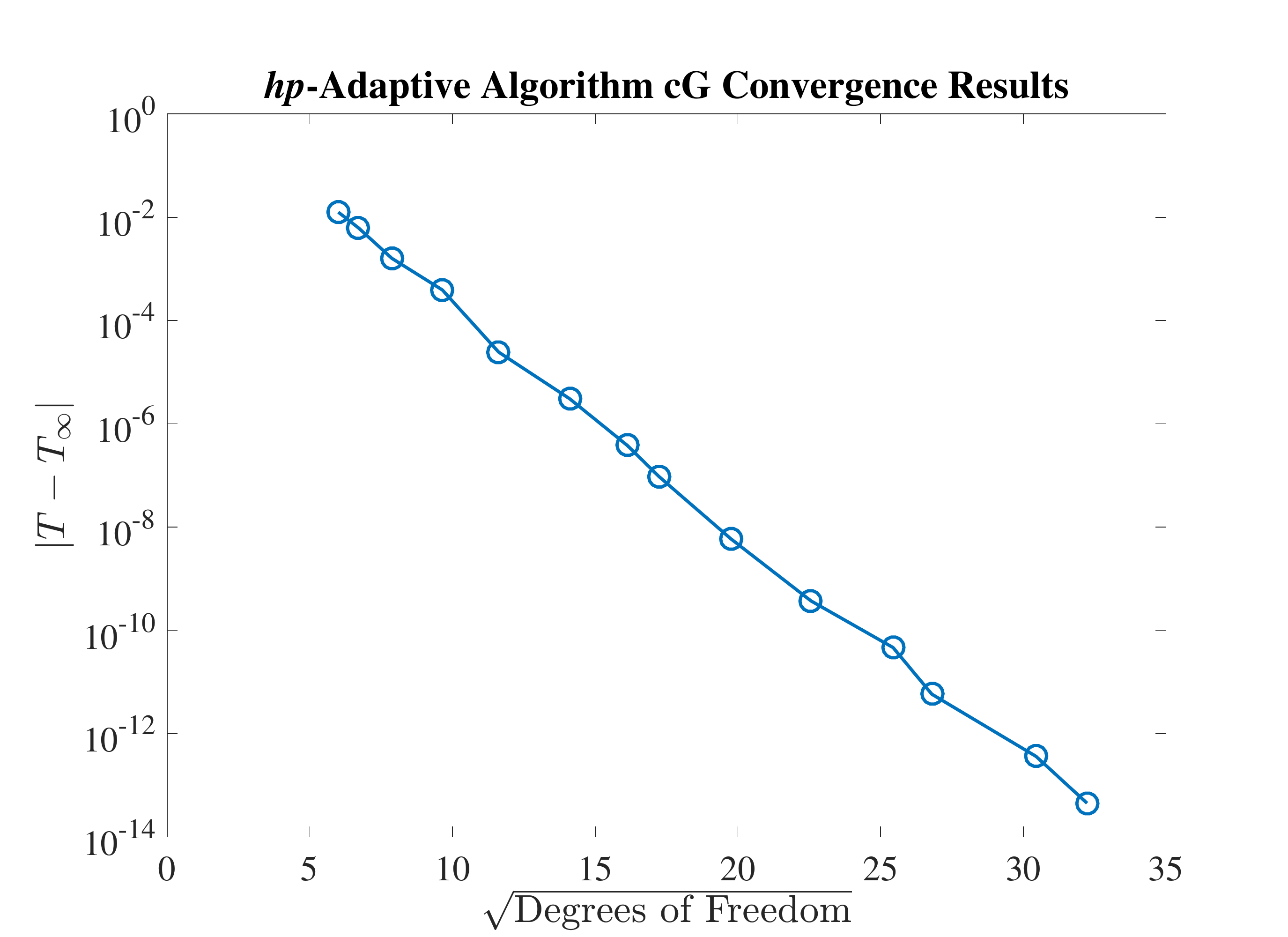} \includegraphics[scale=0.265]{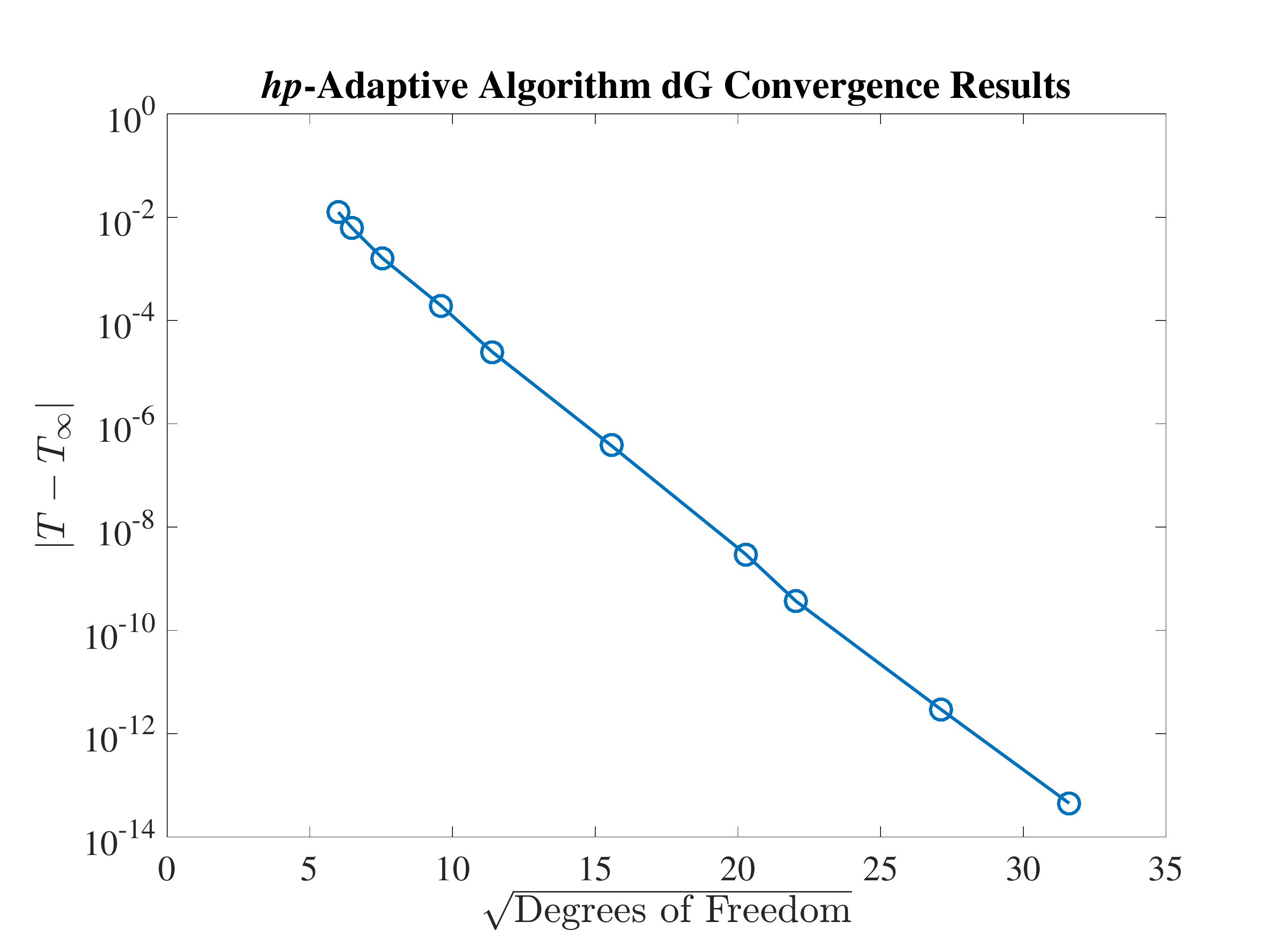}
\caption{Example 1: adaptive algorithm convergence results.}
\label{example1results}
\end{figure}

We begin by applying the $h$-adaptive algorithm to \eqref{eq:1} with initial condition $u(0) = 1$ for both Galerkin time stepping methods utilizing polynomials up to and including degree~{$r=4$}. We reduce the tolerance and observe the rate of convergence of the blow-up time error $|T - T_{\infty}|$. The results given in Figure \ref{example1results} show that
\begin{equation}
\begin{split}
\notag
|T - T_{\infty}| \propto (\text{\#DoFs})^{-(r+1)},
\end{split}
\end{equation}
for both Galerkin methods where $r$ is the polynomial degree and \#DoFs signifies the total number of degrees of freedom. Next, we apply the $hp$-adaptive algorithm to \eqref{eq:1} with the same data. We reduce the tolerance and observe exponential convergence of the blow-up time error for both Galerkin methods, viz.,
\begin{equation}
\begin{split}
\notag
|T - T_{\infty}| \propto \mathrm{e}^{-\sqrt{{b}\text{\#DoFs}}},
\end{split}
\end{equation}
with some constant~$b>0$, as shown in Figure \ref{example1results}.

For the $hp$-adaptive algorithm under the $hp$-cG scheme~\eqref{eq:cG}, we also observe the \emph{effectivity indices} of the error estimator (with respect to the reconstruction error) given by
\begin{equation}
\begin{split}
\notag
\text{effectivity index } |_{I_m} := \frac{\delta_m \psi_m}{\displaystyle \max_{1 \leq k \leq m} \|\widehat{e}_k\|_{L^{\infty}(I_k)} },
\end{split}
\end{equation}
over the course of each computational run for different tolerances and record the best effectivity indices observed in a given computational run versus the inverse of the distance to the blow-up time in Figure \ref{example1deltaEI}. The results show that the error estimator seems to account well for the behaviour of the reconstruction error in certain situations with an effectivity index of 70 for a blow-up problem being particularly impressive; in other situations, however, the effectivity indices observed are much larger. Such a large variation in the effectivity indices observed is actually to be \emph{expected} since in order to provide an upper bound for the error in any situation, the error estimator must account for the \emph{worst possible scenario} to a blow-up problem which may not actually be realised in practise.

Finally, for the $hp$-adaptive algorithm under the $hp$-cG scheme~\eqref{eq:cG}, we plot the value of $\widehat{\delta}_m$ versus the inverse of the distance to the blow-up time over the course of the final computation and display the results in Figure \ref{example1deltaEI}. If we denote the distance from the blow-up time at time $t_m$ by $\varepsilon_m := |t_m - T_{\infty}|$ then Figure \ref{example1deltaEI} implies the relationship
\begin{equation}
\begin{split}\notag
\widehat{\delta}_m \propto \varepsilon^{-2}_m.
\end{split}
\end{equation}
Thus from Corollary \ref{algorithmbound}, we infer that there exists some constant $C>0$ that is independent of the distance to the blow-up time and the maximum time step length such that
\begin{equation}
\begin{split}
\notag
\max_{1 \leq m \leq M} \|\widehat{e}_m\|_{L^{\infty}(I_m)} \leq CM\varepsilon^{-2}_M \tt tol^{\star}.
\end{split}
\end{equation}
Since $\displaystyle \|u\|_{L^\infty(0,T)} = \varepsilon_M^{-1}$, we conclude that the error estimator blows up at a faster rate than the exact solution in this example; moreover, we infer from Figure \ref{example1deltaEI} that in the best case scenario (for the error estimator), this rate appears to be quasi-optimal in the sense that it mirrors the rate of the reconstruction error. Finally, we remark that this result gives weight to the interpretation of $\widehat{\delta}_M$ as an \emph{a posteriori} approximation to the blow-up rate of the error on $(0,T)$ for nonlinear $\F$.

\begin{figure}
\centering
\includegraphics[scale=0.265]{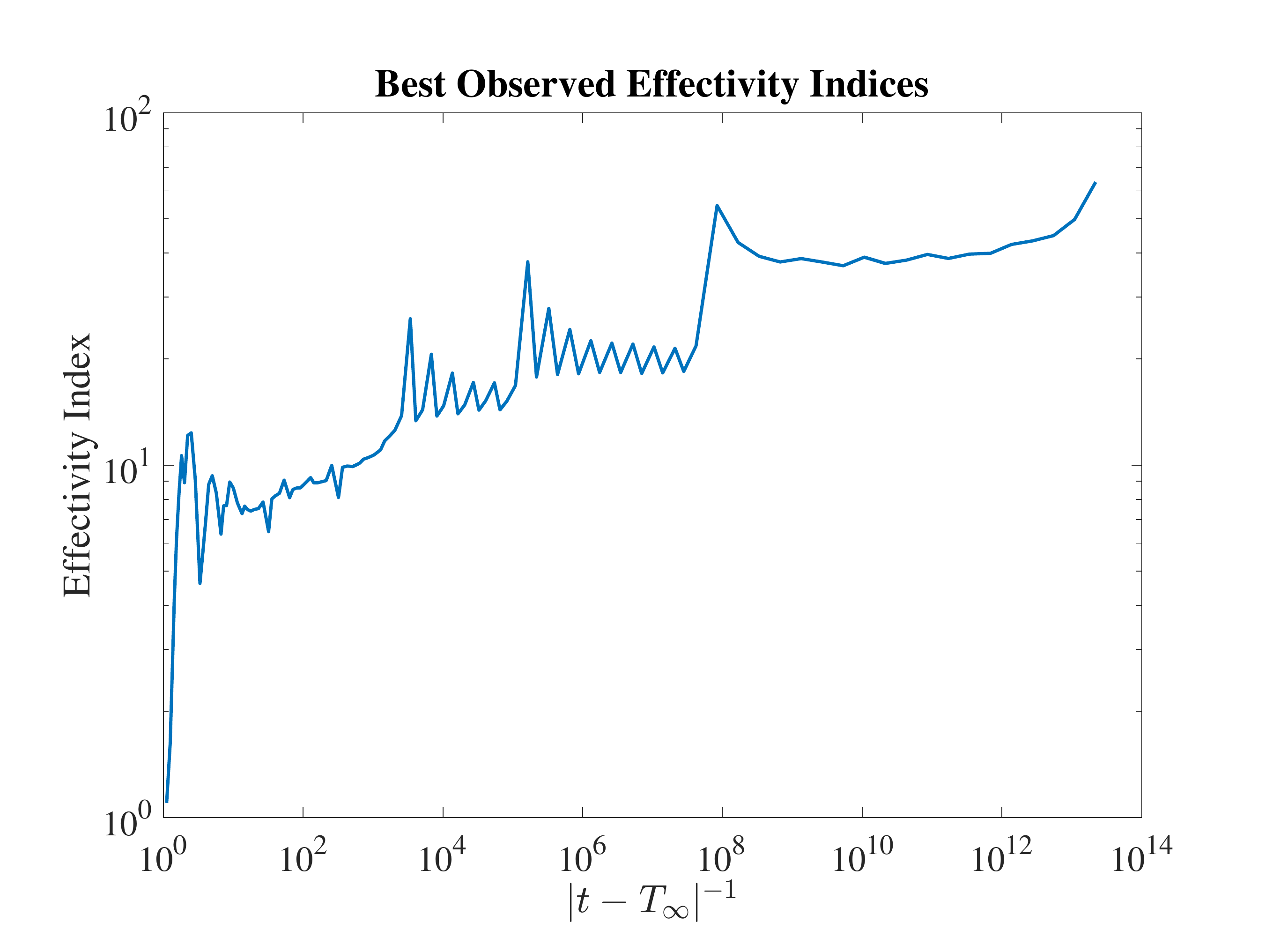} \includegraphics[scale=0.265]{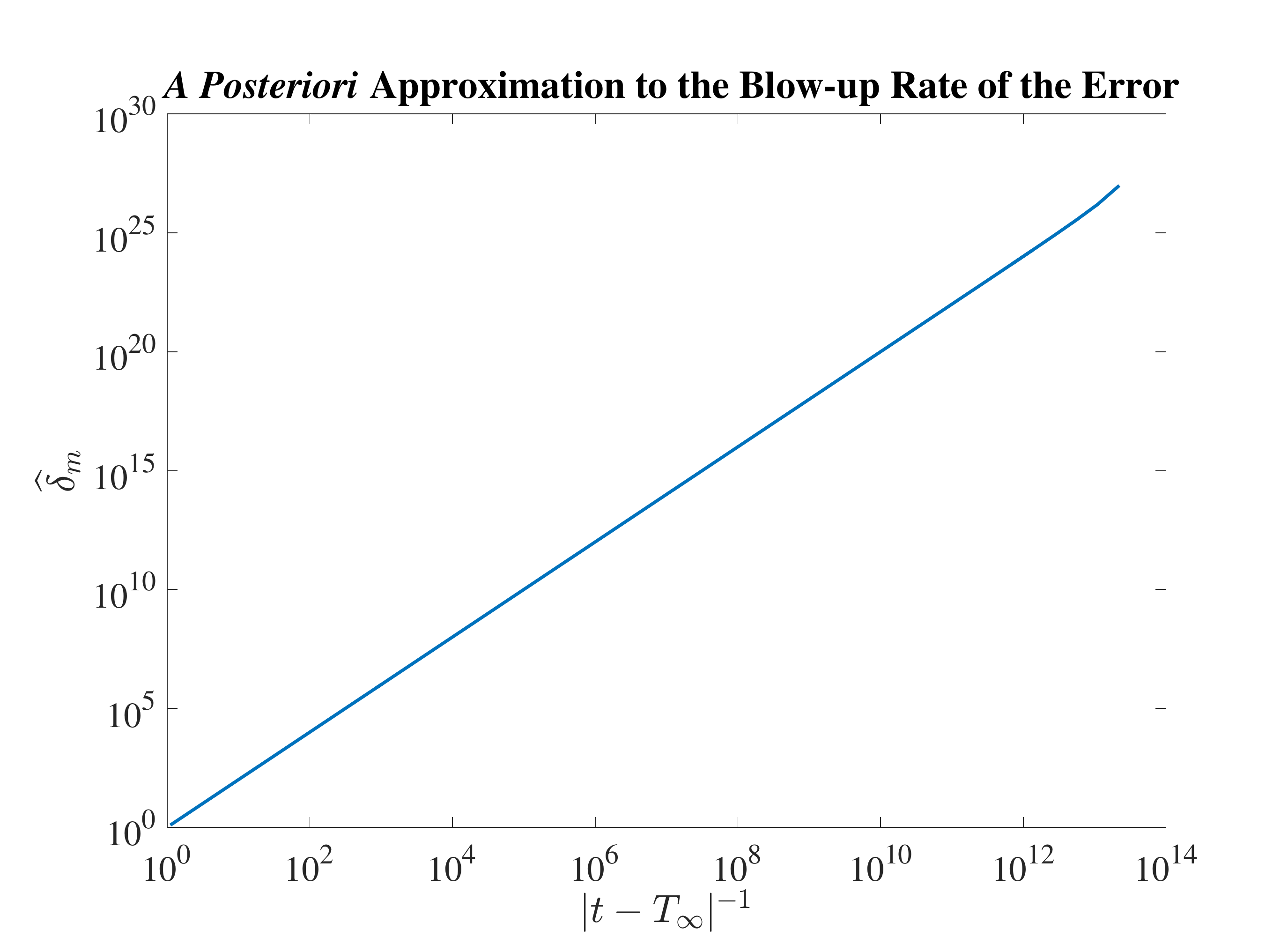}
\caption{Example 1: best observed effectivity indices and $\widehat{\delta}_m$.}
\label{example1deltaEI}
\end{figure}
 
\subsection{Example 2}

In this numerical example, we consider \eqref{eq:1} with the exponential nonlinearity $\F(t,u) = \mathrm{e}^u$. Through separation of variables the exact solution is given by 
\begin{equation}
\begin{split}
\notag
u(t) = \log \bigg(\frac{\mathrm{e}^{u_0}}{1-\mathrm{e}^{u_0}t}\bigg),
\end{split}
\end{equation}
which has blow-up time given by $T_{\infty} = \mathrm{e}^{-u_0}$. Note that for any $v_1, v_2 \in \mathbb{R}$, the nonlinearity $\F$ satisfies
\begin{equation}
\begin{split}
|\F(v_1)-\F(v_2)| = |\mathrm{e}^{v_1} -\mathrm{e}^{v_2}| \leq \frac{1}{2}|v_1 - v_2|(\mathrm{e}^{|v_1|}+\mathrm{e}^{|v_2|}),
\end{split}
\end{equation}
so we set $\mathcal{L}(|v_1|,|v_2|) = \nicefrac{1}{2}(\mathrm{e}^{|v_1|}+\mathrm{e}^{|v_2|})$. Thus for this nonlinearity, we have
\begin{equation}
\begin{split}
\phi_m(\delta) = \exp\bigg(\frac{1}{2}(1+\mathrm{e}^{\delta\psi_m})\int_{I_m} \mathrm{e}^{|\widehat{U}_m|} \dif s \bigg)-\delta
\end{split}
\end{equation} 
in~\eqref{eq:phi}.

\begin{figure}
\centering
\includegraphics[scale=0.265]{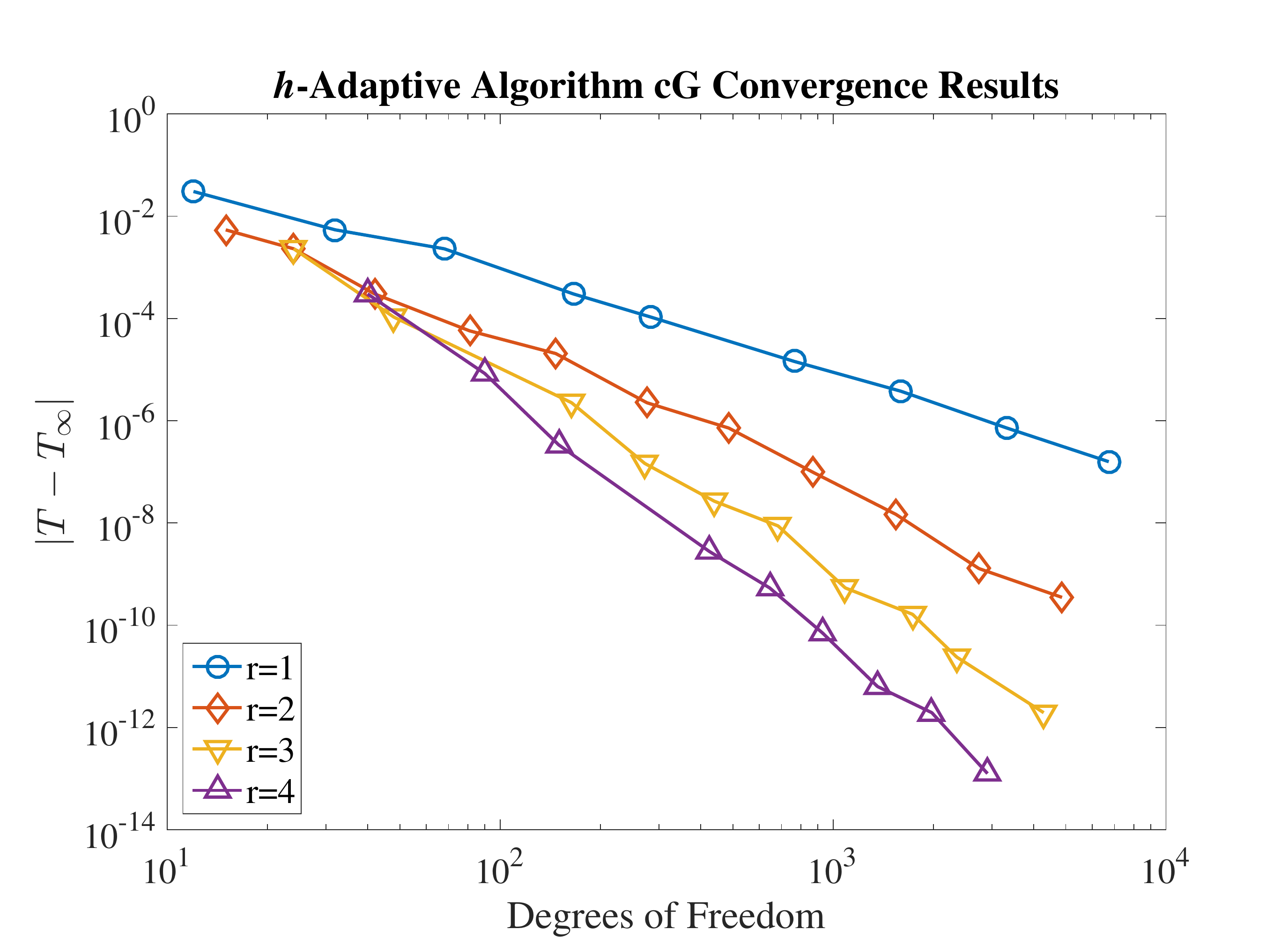} \includegraphics[scale=0.265]{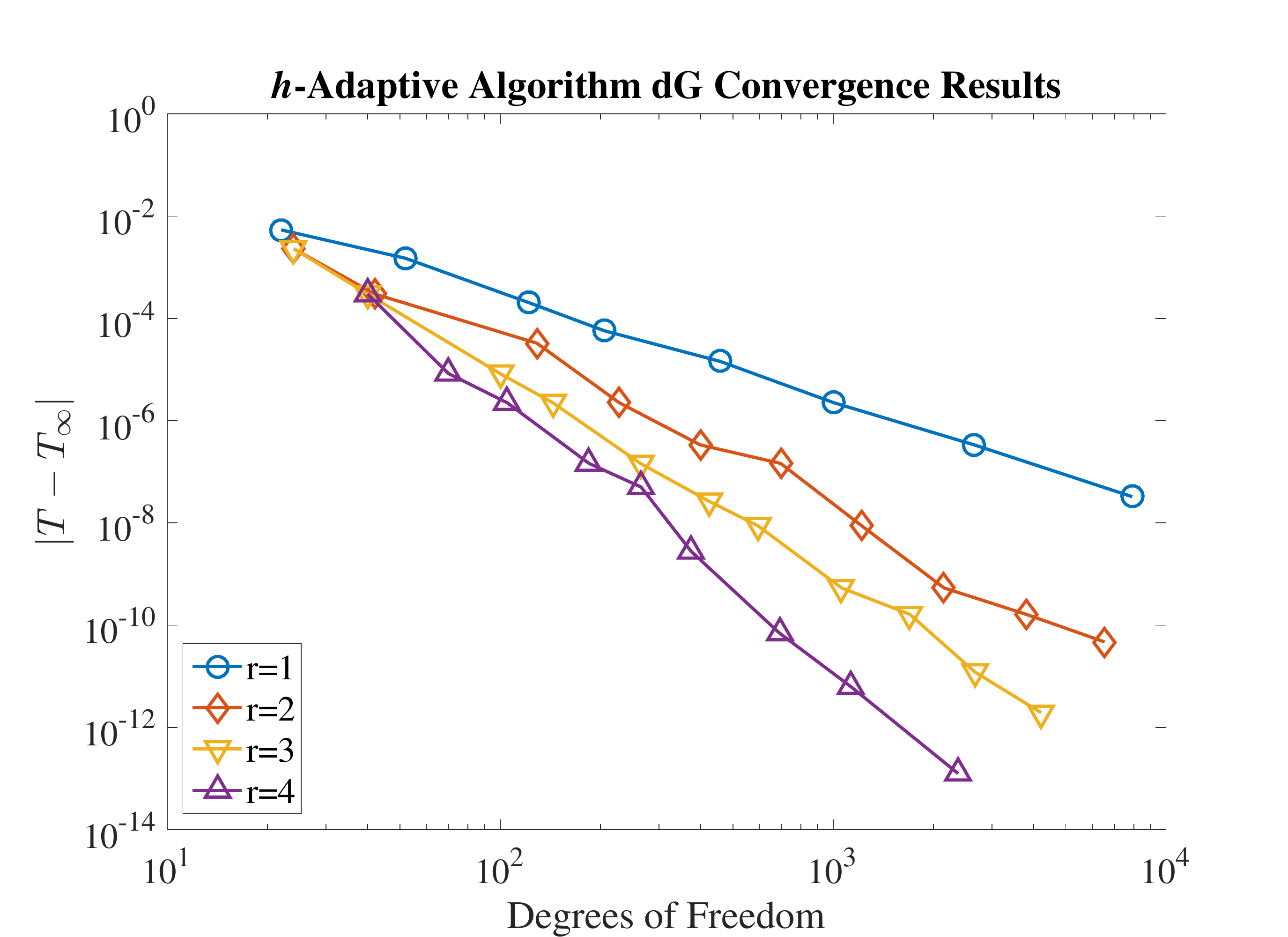}
\includegraphics[scale=0.265]{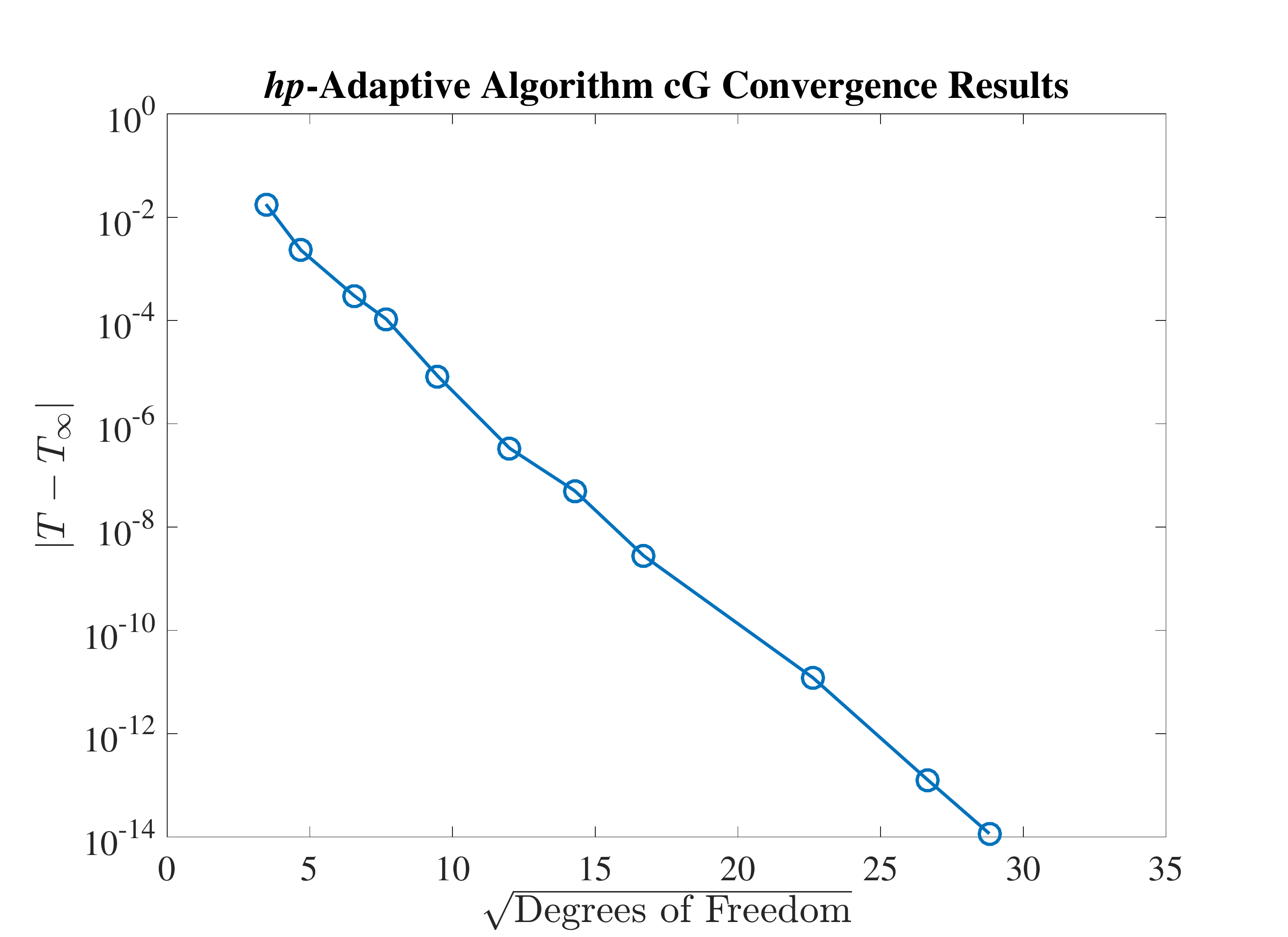} \includegraphics[scale=0.265]{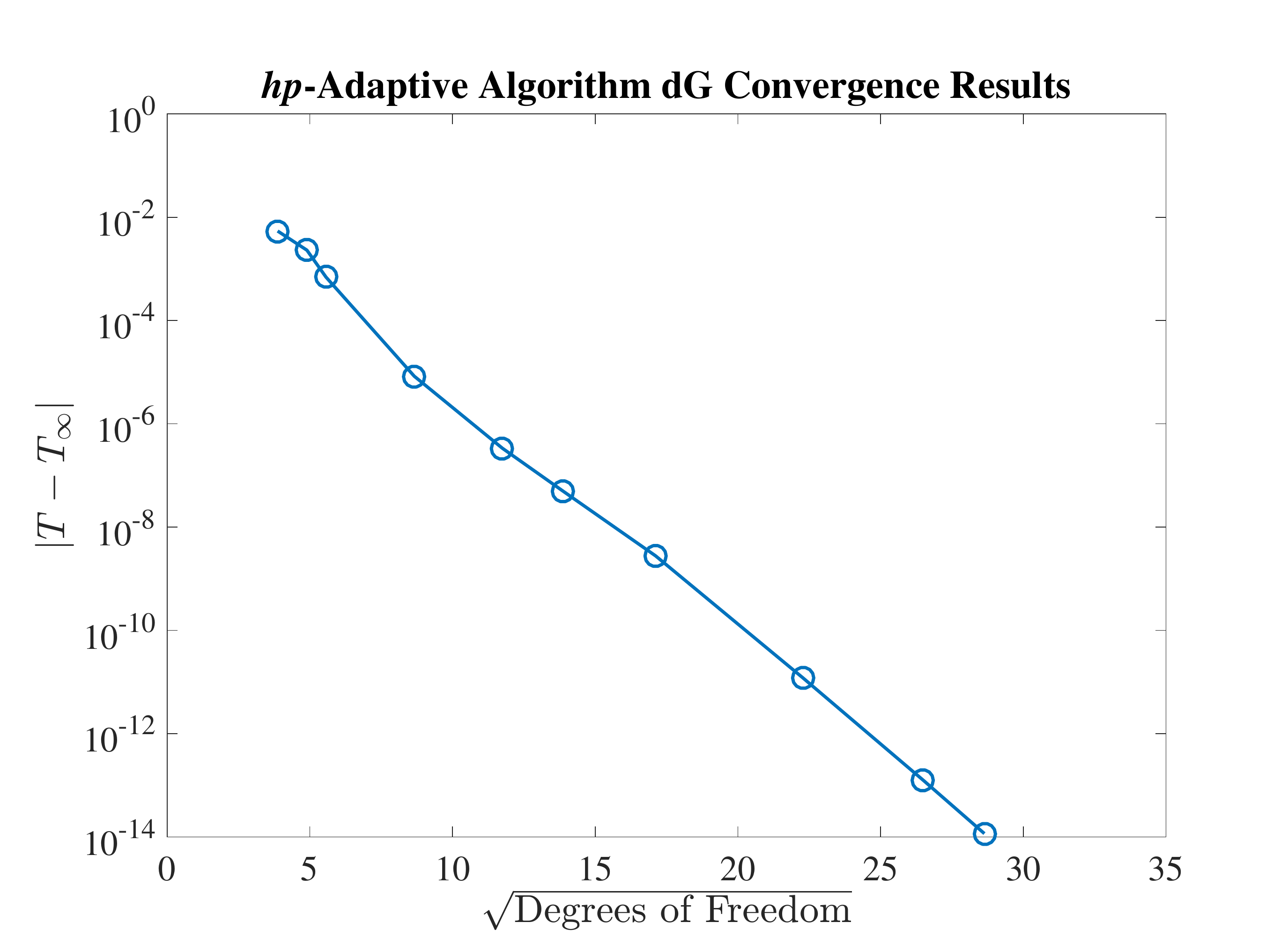}
\caption{Example 2: adaptive algorithm convergence results.}
\label{example2results}
\end{figure}

As in Example 1, we apply the $h$-adaptive algorithm to \eqref{eq:1} with initial condition $u(0) = 1$ for both Galerkin methods utilizing polynomials up to and including degree~$r=4$. We reduce the tolerance and observe the rate of convergence to the blow-up time. The results given in Figure \ref{example2results} show that for this example we also have that
\begin{equation}
\begin{split}
\notag
|T - T_{\infty}| \propto (\text{\#DoFs})^{-(r+1)},
\end{split}
\end{equation}
for both Galerkin methods where $r$ is the polynomial degree. Next, we apply the $hp$-adaptive algorithm to \eqref{eq:1} with the same data. As in Example 1, the tolerance is reduced and the results given in Figure \ref{example2results} show exponential convergence to the blow-up time, viz.,
\begin{equation}
\begin{split}
\notag
|T - T_{\infty}| \propto \mathrm{e}^{-\sqrt{{b}\text{\#DoFs}}},
\end{split}
\end{equation}
with some constant~$b>0$, for both Galerkin methods.

Additionally, for the $hp$-adaptive algorithm under the $hp$-cG scheme~\eqref{eq:cG}, we observe the effectivity indices (with respect to the reconstruction error) over the course of each computational run for different tolerances and record the best effectivity indices observed in a given computational run in Figure \ref{example2deltaEI}. The range of effectivity indices observed over all computational runs is comparable to the range observed in the previous example with large variation and an effectivity index of around 60 in the best case, cf. Figure \ref{example2deltaEI}.

Finally, for the $hp$-adaptive algorithm under the $hp$-cG scheme~\eqref{eq:cG}, we plot the value of $\widehat{\delta}_m$ over the course of the final computation with the results displayed in Figure \ref{example2deltaEI}. With the notation from the previous example, we deduce that
\begin{equation}
\begin{split}\notag
\widehat{\delta}_m \propto \varepsilon^{-1}_m,
\end{split}
\end{equation}
for this example. Thus from Corollary \ref{algorithmbound}, we infer that there exists some constant $C>0$ that is independent of the distance to the blow-up time and the maximum time step length such that
\begin{equation}
\begin{split}
\notag
\max_{1 \leq m \leq M} \|\widehat{e}_m\|_{L^{\infty}(I_m)} \leq CM\varepsilon^{-1}_M \tt tol^{\star}.
\end{split}
\end{equation}
Since $\displaystyle \|u\|_{L^\infty(0,T)} = \log(\varepsilon_M^{-1})$, we remark that the error estimator blows up at a faster rate than the exact solution in this example as well. Moreover, Figure \ref{example2deltaEI} implies that this rate is again quasi-optimal in the best case scenario for the error estimator. To conclude, we remark that this result gives additional weight to the interpretation of $\widehat{\delta}_M$ as an \emph{a posteriori} approximation to the blow-up rate of the error on $(0,T)$ when $\F$ is nonlinear.

\begin{figure}
\centering
\includegraphics[scale=0.265]{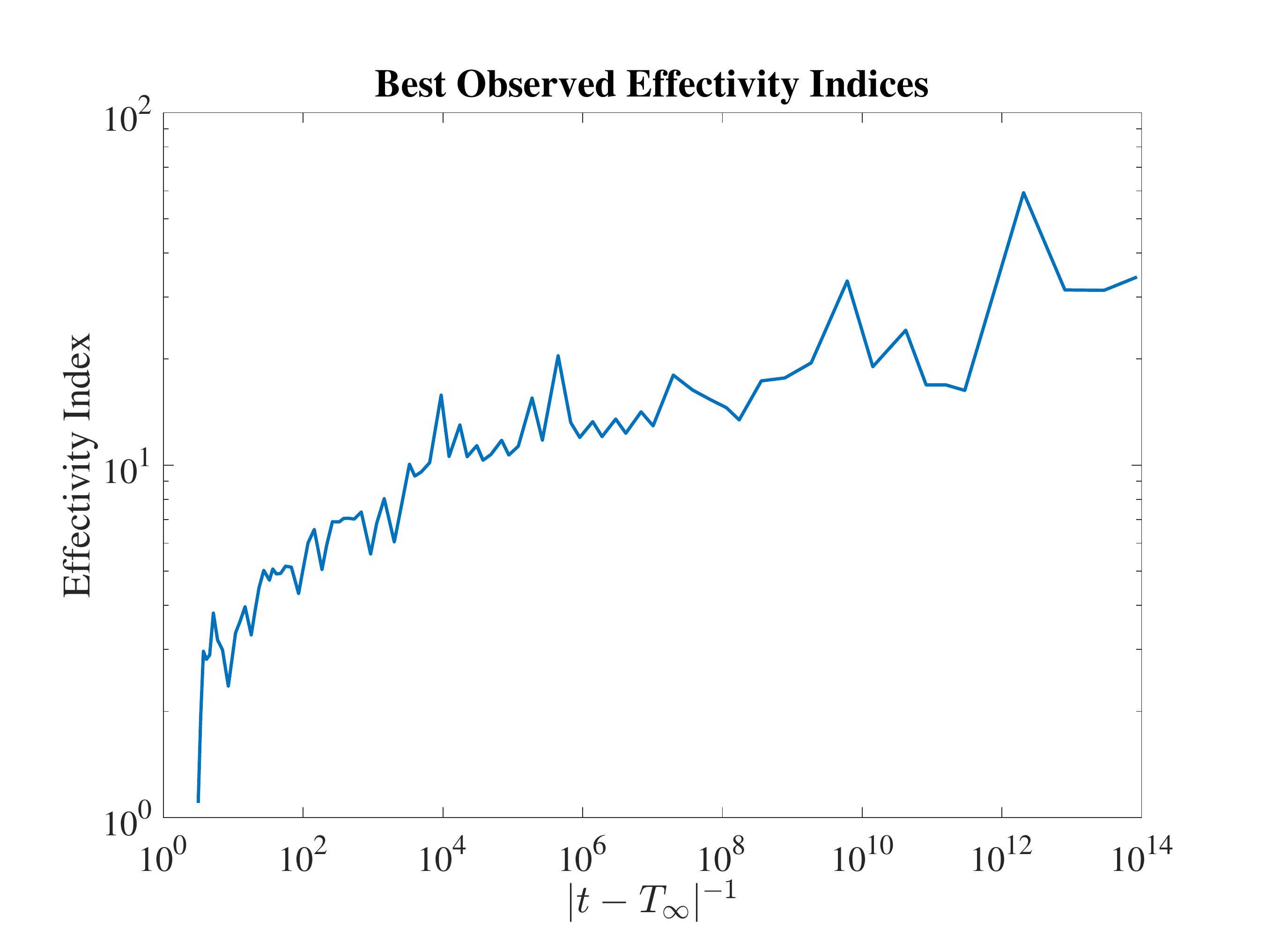} \includegraphics[scale=0.265]{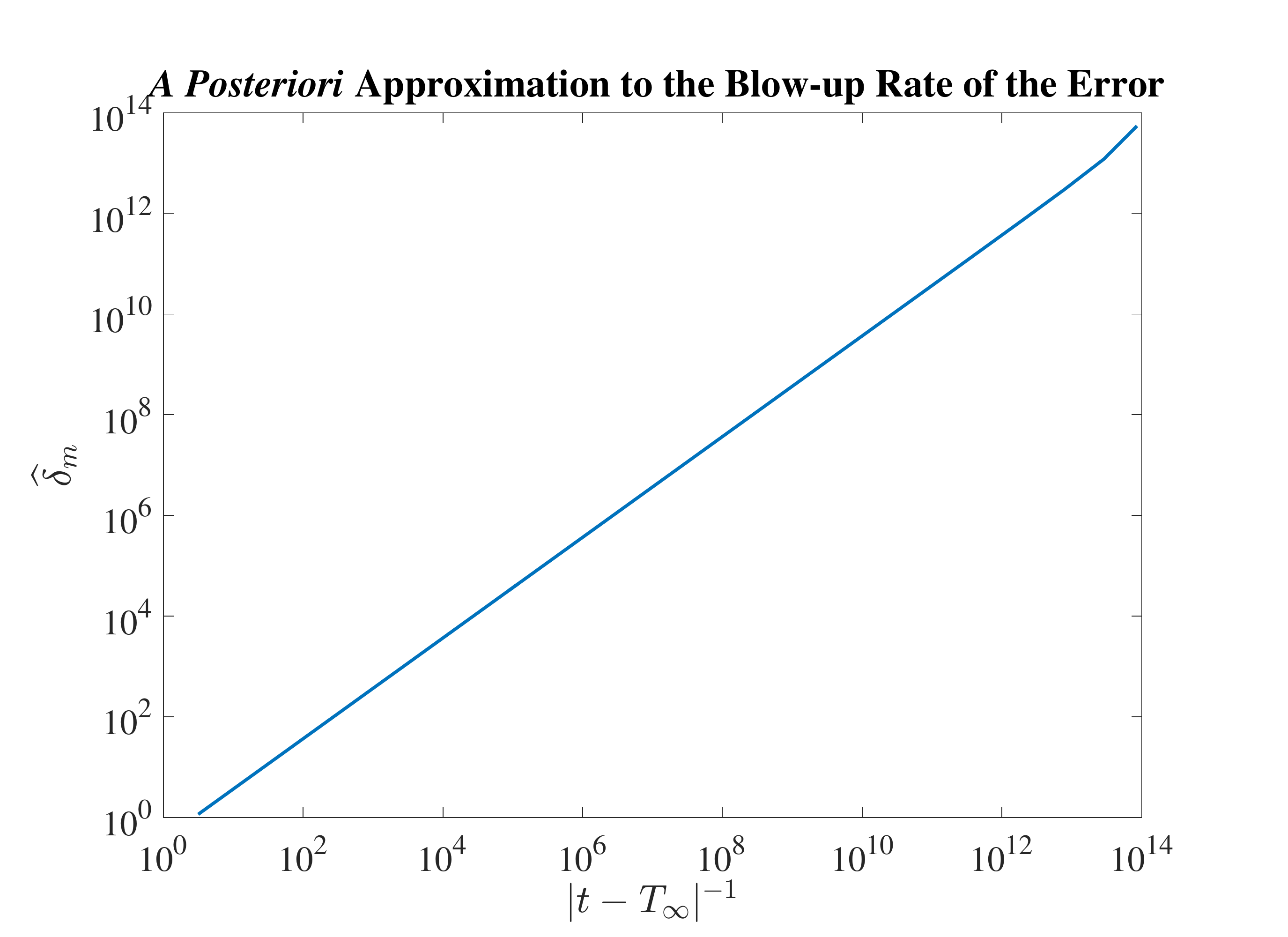}
\caption{Example 2: best observed effectivity indices and $\widehat{\delta}_m$.}
\label{example2deltaEI}
\end{figure}


\section{Conclusions}

In this paper, we derived a conditional \emph{a posteriori} error bound through a continuation argument for Galerkin time discretizations of general nonlinear initial value problems; the derived bound was then used to drive $h$ and $hp$ versions of an adaptive algorithm designed to approximate the blow-up time. Numerical experiments indicate that the $h$-version of the adaptive algorithm attains algebraic convergence towards the blow-up time while the $hp$-version of the adaptive algorithm attains exponential convergence towards the blow-up time. This work constitutes an important step towards deriving $hp$-version \emph{a posteriori} error bounds for the semilinear heat equation that are \emph{robust} with respect to the distance from the blow-up time, thereby extending the results in the works \cite{CGKM15, KM01}.


\bibliographystyle{amsplain}
\bibliography{paper}

\providecommand{\bysame}{\leavevmode\hbox to3em{\hrulefill}\thinspace}
\providecommand{\MR}{\relax\ifhmode\unskip\space\fi MR }
\providecommand{\MRhref}[2]{%
  \href{http://www.ams.org/mathscinet-getitem?mr=#1}{#2}
}
\providecommand{\href}[2]{#2}
\begin{thebibliography}{10}

\bibitem{AkrivisMakridakisNochetto:09}
G.~Akrivis, C.~Makridakis, and R.H. Nochetto, \emph{Optimal order a posteriori
  error estimates for a class of {R}unge-{K}utta and {G}alerkin methods},
  Numer.~Math. \textbf{114} (2009), no.~1, 133--160.

\bibitem{rannacher}
W.~Bangerth and R.~Rannacher, \emph{Adaptive finite element methods for
  differential equations}, Lectures in Mathematics, ETH Z\"urich, Birkh\"auser
  Verlag, Basel, 2003.

\bibitem{BrunnerSchoetzau06}
H.~Brunner and D.~Sch{\"o}tzau, \emph{$hp$-discontinuous {G}alerkin
  time-stepping for {V}olterra integro\-differential equations}, SIAM J. Numer.
  Anal. \textbf{44} (2006), 224--245.

\bibitem{CGKM15}
A.~Cangiani, E.H. Georgoulis, I.~Kyza, and S.~Metcalfe, \emph{Adaptivity and
  blow-up detection for nonlinear evolution problems}, to appear in SIAM
  J.~Sci.~Comput. (2017).

\bibitem{CM08}
E.~Cuesta and C.~Makridakis, \emph{A posteriori error estimates and maximal
  regularity for approximations of fully nonlinear parabolic problems in
  {B}anach spaces}, Numer. Math. \textbf{110} (2008), 257--275.

\bibitem{DelfourHagerTrochu81}
M.~Delfour, W.~Hager, and F.~Trochu, \emph{Discontinuous {G}alerkin methods for
  ordinary differential equations}, Math. Comp. \textbf{36} (1981), 455--473.

\bibitem{DeDu86}
M.~C. Delfour and F.~Dubeau, \emph{Discontinuous polynomial approximations in
  the theory of one-step, hybrid and multistep methods for nonlinear ordinary
  differential equations}, Math. Comp. \textbf{47} (1986), no.~175, 169--189,
  S1--S8.

\bibitem{Estep95}
D.~Estep, \emph{A posteriori error bounds, global error control for
  approximation of ordinary differential equations}, SIAM J. Numer. Anal.
  \textbf{32} (1995), 1--48.

\bibitem{EstepFrench94}
D.~Estep and D.~French, \emph{Global error control for the continuous
  {G}alerkin finite element method for ordinary differential equations}, RAIRO
  Mod\'el. Math. Anal. Num\'er. \textbf{28} (1994), 815--852.

\bibitem{FWW14}
T.~Fankhauser, T.~P. Wihler, and M.~Wirz, \emph{The hp-adaptive {FEM} based on
  continuous {S}obolev embeddings: {I}sotropic refinements},
  Comput.~Math.~Appl. \textbf{67} (2014), no.~4, 854--868.

\bibitem{JanssenWihler:15}
B.~Janssen and T.~P. Wihler, \emph{Existence results for the continuous and
  discontinuous {G}alerkin time stepping methods for nonlinear initial value
  problems}, arXiv preprint arXiv:1407.5520 (2014).

\bibitem{johnson}
C.~Johnson, \emph{Error estimates and adaptive time-step control for a class of
  one-step methods for stiff ordinary differential equations}, SIAM J. Numer.
  Anal. \textbf{25} (1988), 908--926.

\bibitem{KNS04}
D.~Kessler, R.~H. Nochetto, and A.~Schmidt, \emph{A posteriori error control
  for the {A}llen-{C}ahn problem: circumventing {G}ronwall'��s inequality},
  ESAIM Math. Model. Numer. Anal. \textbf{38} (2004), no.~1, 129--142 (eng).

\bibitem{KM01}
I.~Kyza and C.~Makridakis, \emph{Analysis for time discrete approximations of
  blow-up solutions of semilinear parabolic equations}, SIAM J. Numer. Anal.
  \textbf{49} (2011), no.~1, 405--426.

\bibitem{MM14}
W.F. Mitchell and M.A. McClain, \emph{A comparison of hp-adaptive strategies
  for elliptic partial differential equations}, ACM Transactions on
  Mathematical Software (TOMS) \textbf{41} (2014), no.~1, 2.

\bibitem{SchoetzauSchwabDGODE}
D.~Sch{\"o}tzau and C.~Schwab, \emph{An $hp$ a-priori error analysis of the
  {DG} time-stepping method for initial value problems}, Calcolo \textbf{37}
  (2000), 207--232.

\bibitem{SchoetzauSchwab00}
\bysame, \emph{Time discretization of parabolic problems by the $hp$-version of
  the discontinuous {G}alerkin finite element method}, SIAM J. Numer. Anal.
  \textbf{38} (2000), 837--875.

\bibitem{SchoetzauSchwab01}
\bysame, \emph{$hp$-discontinuous {G}alerkin time-stepping for parabolic
  problems}, C. R. Acad. Sci. Paris, S\'erie I \textbf{333} (2001), 1121--1126.

\bibitem{ScWi10}
D.~Sch{\"o}tzau and T.~P. Wihler, \emph{A posteriori error estimation for
  {$hp$}-version time-stepping methods for parabolic partial differential
  equations}, Numer. Math. \textbf{115} (2010), no.~3, 475--509. \MR{2640055
  (2012c:65165)}

\bibitem{Gerdes}
T.~Werder, K.~Gerdes, D.~Sch\"otzau, and C.~Schwab, \emph{$hp$-discontinuous
  {G}alerkin time-stepping for parabolic problems}, Comput. Methods Appl. Mech.
  Engrg. \textbf{190} (2001), 6685--6708.

\bibitem{Wihler05}
T.~P. Wihler, \emph{An a-priori error analysis of the $hp$-version of the
  continuous {G}alerkin {FEM} for nonlinear initial value problems}, J. Sci.
  Comput. \textbf{25} (2005), 523--549.

\end{thebibliography}
\end{document}